\numberwithin{equation}{section}
	\newtheorem{theorem}{Theorem}[section]
	\newtheorem{proposition}[theorem]{Proposition}
	\newtheorem{lemma}[theorem]{Lemma}
\begin{document}
	\title[Korteweg de-Vries-Kawahara equation]
	{On the Radius of Analyticity for a Korteweg-de Vries-Kawahara Equation with a Weakly Damping Term}
	
	\author[A. Boukarou]
	{Aissa Boukarou}
	\address{A. Boukarou \newline
	Department of Mathematics \newline
	Higher School of Technological Education, Skikda \newline
	Algeria}
	\email{boukarouaissa@gmail.com}
	
	\author[D. O. da Silva]{Daniel Oliveira da Silva}
	\address{D. O. da Silva \newline
	Department of Mathematics \newline
	Nazarbayev University \newline
	Qabanbai Batyr Avenue 53 \newline
	010000 Nur-Sultan \newline
	Kazakhstan \newline
	\newline
	Current affiliation: \newline
	Department of Mathematics \newline
	California State University, Los Angeles \newline
	5151 State University Drive \newline
	Los Angeles, CA 90032}
	\email{daniel.dasilva@nu.edu.kz}

	\subjclass[2010]{35G25, 35K55}
	\keywords{Analytic solution, Gevrey space, KdV, Kawahara, damping}
	
	\begin{abstract}
	We consider the Cauchy problem for an equation of Korteweg-de Vries-Kawahara type with initial data in the analytic Gevrey spaces.  By using linear, bilinear and trilinear estimates in analytic Bourgain spaces, we establish the local well-posedness for this problem.  By using an approximate conservation law, we extend this to a global result in such a way that the radius of analyticity of solutions is uniformly bounded below by a fixed positive number for all time.
	\end{abstract}
	
	\maketitle


\section{Introduction}

In a recent paper \cite{Coclite}, Coclite and di Ruvo studied the Cauchy problem for the Korteweg-de Vries-Kawahara (KdV-K) equation
\begin{equation}\label{pp1}
\partial_{t}u+\alpha\partial_{x}^{5}u+\beta\partial_{x}^{3}u+\mu\partial_{x}u^{2}+\lambda\partial_{x}u^{3} = 0.
\end{equation}
Here, $x\in \mathbb{R}$, $t \geq 0 $, and the parameters $\alpha$, $\beta$, $ \lambda$ and $\mu$ are real numbers, with $\alpha \neq 0$.  In their work, they showed that this problem is globally well-posed for initial data in the Sobolev space $H^{4}$.  This result was obtained by considering a modification of this equation, known as the \emph{vanishing viscosity approximation}, which is given by
\[
\partial_{t}u_{\epsilon} + \alpha \partial_{x}^{5} u_{\epsilon} + \beta \partial_{x}^{3} u_{\epsilon} + \mu \partial_{x} u^{2}_{\epsilon} + \lambda \partial_{x} u_{\epsilon}^{3} = \epsilon \partial_{x}^{6} u_{\epsilon},
\]
and then passing to the limit $\epsilon \rightarrow 0$.

In the present work, we would like to consider the following modification to equation \eqref{pp1}:
\begin{equation}\label{p1}
\partial_{t} u + \alpha \partial_{x}^{5} u + \beta \partial_{x}^{3} u + \mu \partial_{x} u^{2} + \lambda \partial_{x} u^{3} + a(x) u = 0.
\end{equation}
Here, we have added the damping term $au$, whose coefficient $a(x)$ is a function independent of time.  For equation \eqref{p1}, we would like to consider the Cauchy problem for equation \eqref{p1} with initial data in the Gevrey spaces $G^{\sigma}(\mathbb{R})$, which are defined by the norm
\[
\| f \|_{G^{\sigma}(\mathbb{R})} =\| f \|_{G^{\sigma}} = \left( \int_{\mathbb{R}} e^{2 \sigma |\xi|} |\hat{f}(\xi)|^{2}\ d\xi \right)^{\frac{1}{2}}.
\]
As is well-known by now, functions in the Gevrey class satisfy the following Paley-Wiener theorem:
\begin{theorem}\label{wiener}
Let $\sigma > 0$.  Then, the following are equivalent:
\begin{enumerate}
\item $f \in G^{\sigma}(\mathbb{R})$;
\item $f$ is the restriction to the real line of a function $\tilde{f}$ which is holomorphic in the strip
\[
S_{\sigma} = \{ x + i y: x,y \in \mathbb{R}, |y| < \sigma \}
\]
and satisfies
\[
\sup_{|y| < \sigma}\| \tilde{f}(x + i y) \|_{L^2_{x}} < \infty.
\]
\end{enumerate}
\end{theorem}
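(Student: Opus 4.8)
The statement is the Gevrey analogue of the classical Paley--Wiener theorem, so the natural plan is to prove the two implications $(1)\Rightarrow(2)$ and $(2)\Rightarrow(1)$ separately, using the Fourier transform as the bridge between the weighted $L^{2}$ condition and the holomorphic extension.

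For $(1)\Rightarrow(2)$, assuming $f\in G^{\sigma}$, I would simply \emph{define} the candidate extension by inserting a complex argument into the Fourier inversion formula,
\[
\tilde f(x+iy) = \frac{1}{\sqrt{2\pi}}\int_{\mathbb R} e^{i(x+iy)\xi}\,\hat f(\xi)\,d\xi .
\]
Since $|e^{i(x+iy)\xi}| = e^{-y\xi}\le e^{|y|\,|\xi|}$, for each fixed $z$ with $|y|<\sigma$ the integrand is dominated by $e^{(|y|-\sigma)|\xi|}\cdot e^{\sigma|\xi|}|\hat f(\xi)|$, a product of an exponentially decaying factor with an $L^{2}$ factor, hence integrable by Cauchy--Schwarz; this gives absolute convergence. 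Holomorphy on $S_{\sigma}$ then follows by differentiating under the integral sign (or via Morera's theorem together with Fubini), the required domination being uniform on compact subsets of the strip because there $|y|\le\sigma-\delta$ for some $\delta>0$. Setting $y=0$ recovers $f$ by Fourier inversion, and Plancherel's theorem gives the slice bound directly:
\[
\|\tilde f(\cdot+iy)\|_{L^{2}_{x}}^{2} = \int_{\mathbb R} e^{-2y\xi}|\hat f(\xi)|^{2}\,d\xi \le \int_{\mathbb R} e^{2\sigma|\xi|}|\hat f(\xi)|^{2}\,d\xi = \|f\|_{G^{\sigma}}^{2},
\]
where we used $-2y\xi\le 2|y||\xi|\le 2\sigma|\xi|$, so that the supremum over the strip is at most $\|f\|_{G^{\sigma}}$.

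The converse $(2)\Rightarrow(1)$ is where the real work lies, and the key step is to \emph{identify} the Fourier transform of each horizontal slice: I claim that for $|y|<\sigma$,
\[
\widehat{\tilde f(\cdot + iy)}(\xi) = e^{-y\xi}\,\hat f(\xi).
\]
I would establish this by a contour-shift argument, applying Cauchy's theorem to $e^{-iz\xi}\tilde f(z)$ around the rectangle with vertical sides at $x=\pm T$ and horizontal sides on the real axis and at height $y$; the two horizontal integrals produce, after $T\to\infty$, the two sides of the claimed identity. Granting this, Plancherel gives $\int_{\mathbb R} e^{-2y\xi}|\hat f(\xi)|^{2}\,d\xi = \|\tilde f(\cdot+iy)\|_{L^{2}_{x}}^{2}\le M^{2}$, where $M$ denotes the finite supremum in the hypothesis. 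Letting $y\to-\sigma^{+}$ controls the part of the integral over $\xi>0$ and letting $y\to\sigma^{-}$ controls the part over $\xi<0$, so that by Fatou's lemma one obtains $\int_{\mathbb R} e^{2\sigma|\xi|}|\hat f(\xi)|^{2}\,d\xi\le 2M^{2}<\infty$, that is, $f\in G^{\sigma}$.

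The main obstacle is the rigorous justification of the contour-shift identity under only the $L^{2}$ hypothesis: the Fourier transforms of the slices exist as $L^{2}$ limits rather than absolutely convergent integrals, and the vertical sides of the rectangle cannot be killed by a naive pointwise estimate, since we have no pointwise decay of $\tilde f(x+iy)$ as $x\to\pm\infty$. I expect to circumvent this by first mollifying in the real variable (convolving each slice with an approximate identity $\rho_{\delta}$, which preserves holomorphy in $z$ and improves decay), proving the identity for the smooth approximants where the contour argument is elementary, and then passing to the limit $\delta\to0$ using the uniform slice bound; alternatively, one selects a sequence $T_{n}\to\infty$ along which the vertical contributions vanish, which is possible precisely because the slices are square-integrable in $x$ uniformly in $y$.
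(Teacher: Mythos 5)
The paper does not prove this theorem itself; it simply cites Katznelson (p.~174), and your argument is essentially the standard proof given there: Fourier inversion with a complexified exponent for $(1)\Rightarrow(2)$, and the slice identity $\widehat{\tilde f(\cdot+iy)}(\xi)=e^{-y\xi}\hat f(\xi)$ followed by Plancherel and a limit $y\to\pm\sigma^{\mp}$ for the converse. Your sketch is correct, and you rightly flag the only delicate point --- justifying the contour shift in the $L^{2}$ setting --- for which either of your proposed remedies (mollification, or choosing $T_{n}\to\infty$ so that $\int_{0}^{|y|}|\tilde f(\pm T_{n}+is)|^{2}\,ds\to 0$, which exists since this quantity is integrable in $x$ by Fubini and the uniform slice bound) works.
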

\noindent A proof of this result can be found on page 174 of \cite{Katznelson}.  One problem which has been receiving increasingly greater attention is that of \emph{persistence of analyticity}: given initial data $u(0) \in G^{\sigma_{0}}(\mathbb{R})$ and $T > 0$, for what values of $\sigma(T)$ will $u$ belong to the solution space $$C\left([0,T];G^{\sigma(T)}(\mathbb{R})\right)?$$ We will consider this question for equation \eqref{p1}.

The analyticity problem has been studied for decades, starting with the work of Kato and Masuda \cite{Kato} in 1986, who studied a class of equations of the form $u_{t} = F(t,u)$ and showed that the radius of analyticity decays at most super exponentially.  Since then, many results have appeared in the literature along these lines, each presenting increasingly better rates of the decay for the radius of analyticity.  Currently, the best known result is that of Wang \cite{Wang}, who studied a KdV equation containing a damping term and showed that there exists a constant $\rho > 0$ such that $\sigma(T) \geq \rho$ for all $T > 0$.  Thus, the radius of analyticity of solutions is uniformly bounded away from zero for all time.

Our goal in this paper is two-fold.  First, we will show that for sufficiently short times, there will be no decay in analyticity.  This is the content of the following theorem, which is the first main result:
\begin{theorem} \label{cor2}
	Let $\sigma_{0} > 0$ and $u_{0} \in  G^{\sigma_{0}}(\mathbb{R})$.  Assume that the function $a(x)$ satisfies the following assumptions:
 \begin{itemize}
 	\item[(A1)] There exists $\gamma > 0$ so that
 	\[
 	a(x)\geq \gamma > 0
 	\]
 	for all $x \in \mathbb{R}$.
 	\item [(A2)] The function $a$ satisfies
 	\[
 	\sum_{k = 0}^{\infty} (k + 1)^{\frac{1}{4}} \frac{\sigma_{0}^{k}}{k!} \| \partial_{x}^{k} a \|_{L^{\infty}} < \infty.
 	\]
 \end{itemize}
	Then the Cauchy problem
	\begin{equation}\label{kdvk}
	    \begin{cases}
	        & \partial_{t} u + \alpha \partial_{x}^{5} u + \beta \partial_{x}^{3} u + \mu \partial_{x} u^{2} + \lambda \partial_{x} u^{3} + a(x)u = 0, \\
	        & u(x,0) = u_{0}(x)
	    \end{cases}
	\end{equation}
	is locally well-posed in $G^{\sigma_{0}}(\mathbb{R})$.  Thus, for each $u_{0} \in G^{\sigma_{0}}(\mathbb{R})$, there exists a time interval $I = [0, \delta]$ such that the solution $u(t)$ to the problem \eqref{kdvk} admits a holomorphic extension $\tilde{u}$ on $S_{\sigma_{0}}$ for each $t \in I$.
\end{theorem}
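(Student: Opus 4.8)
The plan is to recast the Cauchy problem \eqref{kdvk} as a fixed-point problem in an analytic Bourgain space and to close it by the contraction mapping principle. Writing $\phi(\xi) = \alpha\xi^{5} - \beta\xi^{3}$ for the phase of the linearized Kawahara operator and $W(t) = e^{-t(\alpha\partial_{x}^{5} + \beta\partial_{x}^{3})}$ for the associated group, I would work in the analytic Bourgain space $X^{0,b}_{\sigma}$ with norm
\[
\|u\|_{X^{0,b}_{\sigma}} = \left\| e^{\sigma|\xi|}\langle\tau-\phi(\xi)\rangle^{b}\,\widehat{u}(\tau,\xi)\right\|_{L^{2}_{\tau,\xi}},
\]
chosen so that, for $b > \tfrac12$, the time-zero trace of an $X^{0,b}_{\sigma}$ function lies in $G^{\sigma}$, exactly the space of the theorem. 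Duhamel's formula turns \eqref{kdvk} into
\[
u(t) = \psi(t)W(t)u_{0} - \psi(t)\int_{0}^{t} W(t-t')\big[\mu\partial_{x} u^{2} + \lambda\partial_{x} u^{3} + a(x)u\big](t')\,dt',
\]
where $\psi$ is a smooth cutoff adapted to $I = [0,\delta]$; I denote the right-hand side by $\Phi(u)$ and seek a fixed point of $\Phi$ in a ball of $X^{0,b}_{\sigma}$.

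The contraction rests on four estimates, all carried out on the Fourier side, where the weight $e^{\sigma|\xi|}$ can be transported through the elementary inequality $e^{\sigma|\xi|}\le e^{\sigma|\xi-\eta|}e^{\sigma|\eta|}$. First come the linear bounds $\|\psi W(t)u_{0}\|_{X^{0,b}_{\sigma}}\lesssim\|u_{0}\|_{G^{\sigma}}$ and the Duhamel bound $\|\psi(t)\int_{0}^{t}W(t-t')F\,dt'\|_{X^{0,b}_{\sigma}}\lesssim\delta^{\theta}\|F\|_{X^{0,b-1}_{\sigma}}$ for a suitable $\theta>0$, which are the analytic analogues of the standard Kawahara energy estimates. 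Next come the bilinear and trilinear estimates $\|\partial_{x}(uv)\|_{X^{0,b-1}_{\sigma}}\lesssim\|u\|_{X^{0,b}_{\sigma}}\|v\|_{X^{0,b}_{\sigma}}$ and $\|\partial_{x}(uvw)\|_{X^{0,b-1}_{\sigma}}\lesssim\|u\|_{X^{0,b}_{\sigma}}\|v\|_{X^{0,b}_{\sigma}}\|w\|_{X^{0,b}_{\sigma}}$, which govern the two nonlinear terms; the fifth-order dispersion provides enough smoothing that these hold at the regularity of the data.

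The genuinely new ingredient, and the step I expect to be the main obstacle, is the damping term $a(x)u$. Because $a$ depends on $x$ alone, multiplication becomes convolution in the spatial frequency, $\widehat{au}(\tau,\xi) = \int\widehat{a}(\xi-\eta)\widehat{u}(\tau,\eta)\,d\eta$, so the modulation $\tau-\phi(\xi)$ is not preserved and the operator cannot be absorbed directly. The mechanism I would use is that assumption (A2) forces $a$ to extend to a \emph{bounded holomorphic function} $\tilde{a}$ on the strip $S_{\sigma_{0}}$: from the Taylor expansion $a(x+iy)=\sum_{k\ge0}\frac{(iy)^{k}}{k!}\partial_{x}^{k}a(x)$ one gets $\sup_{S_{\sigma_{0}}}|\tilde{a}|\le\sum_{k\ge0}\frac{\sigma_{0}^{k}}{k!}\|\partial_{x}^{k}a\|_{L^{\infty}}<\infty$. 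Combined with Theorem \ref{wiener}, this yields the multiplication bound $\|au\|_{G^{\sigma_{0}}}\lesssim\big(\sup_{S_{\sigma_{0}}}|\tilde{a}|\big)\|u\|_{G^{\sigma_{0}}}$, since $\widetilde{au}=\tilde{a}\,\tilde{u}$ is holomorphic on $S_{\sigma_{0}}$ with the required uniform $L^{2}_{x}$ bound. The extra factor $(k+1)^{1/4}$ in (A2) supplies the fractional derivative needed to upgrade this to the space-time estimate $\|au\|_{X^{0,b-1}_{\sigma}}\lesssim C(a)\,\|u\|_{X^{0,b}_{\sigma}}$, the $\tfrac14$-loss being precisely what the fifth-order term can afford. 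Assumption (A1), the positivity of $a$, is not needed for local well-posedness and is reserved for the global theory and its approximate conservation law.

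With these four estimates in hand, choosing the radius of the ball comparable to $\|u_{0}\|_{G^{\sigma_{0}}}$ and $\delta$ small enough that the $\delta^{\theta}$ factor beats the nonlinear and damping constants, a routine argument shows that $\Phi$ maps the ball into itself and is a contraction there. This produces a unique fixed point $u\in C\big([0,\delta];G^{\sigma_{0}}(\mathbb{R})\big)$ depending continuously on $u_{0}$, which is the asserted local well-posedness. Finally, since $u(t)\in G^{\sigma_{0}}(\mathbb{R})$ for each $t\in I$, Theorem \ref{wiener} furnishes the holomorphic extension $\tilde{u}$ of $u(t)$ to the strip $S_{\sigma_{0}}$, completing the proof.
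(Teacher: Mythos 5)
Your proposal follows essentially the same route as the paper: a contraction mapping argument in the analytic Bourgain space $X_{\sigma_0,b}$ built on the linear/Duhamel estimate, the bilinear and trilinear estimates for $\partial_x(u^2)$ and $\partial_x(u^3)$, and a multiplier estimate for $a(x)u$ controlled by the $A^{\sigma_0}$-type norm from (A2), with (A1) correctly identified as irrelevant to the local theory. The only cosmetic difference is that you motivate the damping estimate via the Paley--Wiener extension of $a$, whereas the paper simply invokes the corresponding product lemmas of Wang (Lemmas \ref{AProd} and \ref{AProd1}); the substance is the same.
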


The second goal of this paper is to use the methods of Wang to prove that the radius of analyticity for any time $T > 0$ is bounded away from zero.  This is the content of the next theorem, which is the second main result of this paper.
\begin{theorem}\label{global}
	Assume that (A1) and (A2) hold, and let $u_{0}\in G^{\sigma_{0}}(\mathbb{R})$ for some $\sigma_{0} > 0$. Then there exists a number $\tilde{\sigma_{0}} > 0$ such that for any $T > 0$, the initial value problem \eqref{kdvk} has a solution $$u \in C\left([0,T]; G^{\sigma_{1}}(\mathbb{R})\right).$$  Moreover, we have the estimate
	\begin{equation}\label{exp}
	\|u(t)\|_{G^{\tilde{\sigma_{0}}}} \leq Ce^{- \frac{\gamma t}{2}}
	\end{equation}
	for all $t \geq 0$.  Here, $\tilde{\sigma_{0}}$ depends on $\| u_{0} \|_{G^{\sigma_{0}}}$ and $\sigma_{0}$, but not on $T$.
\end{theorem}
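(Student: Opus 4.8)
The plan is to combine the local theory of Theorem \ref{cor2} with an approximate conservation law and then to iterate, following the scheme of Wang. Fix a radius $\sigma$ with $0 < \sigma \leq \sigma_0$; since $G^{\sigma_0} \hookrightarrow G^{\sigma}$, the datum $u_0$ lies in $G^\sigma$, and Theorem \ref{cor2} produces a solution on an interval $[0,\delta]$ whose length depends only on $\|u_0\|_{G^\sigma}$ and the parameters of the equation. The entire argument consists in showing that, if $\sigma = \tilde\sigma_0$ is chosen small enough, the $G^{\tilde\sigma_0}$ norm does not merely stay finite but in fact decays, so that the local solution can be continued indefinitely at a fixed radius.

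The central step is the approximate conservation law. Writing $v = e^{\sigma|D_x|}u$ for the Fourier multiplier with symbol $e^{\sigma|\xi|}$, so that $\|u\|_{G^\sigma} = \|v\|_{L^2}$, I would apply $e^{\sigma|D_x|}$ to \eqref{kdvk}, pair with $v$, and integrate. The skew-adjoint dispersive terms $\alpha\partial_x^5$ and $\beta\partial_x^3$ contribute nothing. For the damping term I would use the identity $e^{\sigma|D_x|}(au) = av + [e^{\sigma|D_x|},a]u$, so that
\[
\mathrm{Re}\,\langle e^{\sigma|D_x|}(au), v\rangle = \int_{\mathbb{R}} a(x)|v|^2\,dx + \mathrm{Re}\,\langle [e^{\sigma|D_x|},a]u, v\rangle \geq \gamma\|v\|_{L^2}^2 - \|[e^{\sigma|D_x|},a]u\|_{L^2}\|v\|_{L^2},
\]
the lower bound on the integral coming from (A1). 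The commutator $[e^{\sigma|D_x|},a]$ vanishes at $\sigma = 0$, and expanding $e^{\sigma|D_x|}$ in powers of $\sigma$ shows that its operator norm on $G^\sigma$ is $O(\sigma)$ with a constant controlled precisely by the series in (A2); thus the damping yields a genuine dissipative term $-\gamma\|v\|_{L^2}^2$ up to an error of size $O(\sigma)\|v\|_{L^2}^2$. The nonlinear contributions $\mu\langle v, e^{\sigma|D_x|}\partial_x u^2\rangle$ and $\lambda\langle v, e^{\sigma|D_x|}\partial_x u^3\rangle$ do not cancel, but after integrating over $[0,\delta]$ they are estimated by the bilinear and trilinear estimates in analytic Bourgain spaces; the key point is that the symbol difference $e^{\sigma(|\xi|-|\xi_1|-\cdots)}-1$ produces a gain of a positive power $\sigma^\theta$. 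Combining these and running Gronwall on $[0,\delta]$, I expect an estimate of the form
\[
\|u(\delta)\|_{G^\sigma}^2 \leq e^{-2\gamma\delta}\|u_0\|_{G^\sigma}^2 + C\sigma^{\theta}\Big(\sup_{[0,\delta]}\|u\|_{G^\sigma}\Big)^{3},
\]
possibly with a slightly degraded rate so as to absorb the $O(\sigma)$ commutator loss.

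With the approximate conservation law in hand, the global result follows by iteration. Fixing $T$ and partitioning $[0,T]$ into steps of length $\delta$, I would argue by induction that $\|u(n\delta)\|_{G^\sigma} \leq R$ for a fixed $R$ comparable to $\|u_0\|_{G^\sigma}$. Under this hypothesis the local existence time $\delta = \delta(R)$ is fixed, and the conservation law reads $a_{n+1} \leq e^{-2\gamma\delta}a_n + C\sigma^\theta a_n^{3/2}$ with $a_n = \|u(n\delta)\|_{G^\sigma}^2$. Using $a_n^{1/2}\leq R$ and choosing $\sigma = \tilde\sigma_0$ small enough—depending on $R$, hence on $\|u_0\|_{G^{\sigma_0}}$ and $\sigma_0$, on $\gamma$, but \emph{not} on $T$—so that $C\sigma^\theta R \leq e^{-\gamma\delta}(1-e^{-\gamma\delta})$ forces $a_{n+1} \leq e^{-\gamma\delta}a_n$, which closes the induction and yields both the uniform bound and the decay $a_n \leq e^{-\gamma n\delta}a_0$; this gives \eqref{exp} with rate $\gamma/2$ for the norm, the constant $C$ absorbing the within-step growth controlled by the local theory. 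Because every step contracts the norm, the iteration never forces $\sigma$ to shrink, which is exactly why $\tilde\sigma_0$ can be taken independent of $T$.

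\textbf{Main obstacle.} I expect the hard part to be the approximate conservation law, and within it the interaction of the damping with the Gevrey weight: one must show simultaneously that (A1) survives the conjugation by $e^{\sigma|D_x|}$ to produce a true dissipative term, that the commutator error is genuinely $o(1)$ as $\sigma \to 0$ with constant governed by (A2), and that the nonlinear terms contribute only $O(\sigma^\theta)$ after the Bourgain-space estimates. Balancing these three smallness statements so that a single choice of $\sigma$, independent of $T$, makes the dissipation dominate on every time step is the crux of the argument.
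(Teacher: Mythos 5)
Your overall scheme is the same as the paper's: local well-posedness at a fixed radius, an approximate conservation law obtained by applying $\Lambda^{\sigma}=e^{\sigma|D_x|}$ to the equation and pairing with $V=\Lambda^{\sigma}u$, and an induction over time steps of length $\delta$ with $\sigma$ chosen small independently of $T$. However, there is a genuine gap at the point you yourself identify as the crux, namely the commutator with the damping coefficient. You assert that $\|[e^{\sigma|D_x|},a]u\|_{L^2}=O(\sigma)\|u\|_{G^{\sigma}}$ with constant controlled by the series in (A2). Hypothesis (A2) only controls $\sup$-norms of derivatives of $a$; it gives no integrability or decay of $\widehat{a}$, which is what a pure $O(\sigma)$ convolution bound of this type would require. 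The estimate actually available (Lemma 3.3 of Wang, which the paper invokes) is
\[
\| a\,\Lambda^{\sigma}u-\Lambda^{\sigma}(au)\|_{L^{2}}\ \lesssim\ \frac{\sigma}{\sigma_{0}}\|a\|_{A^{\sigma_{0}}}\|u\|_{G^{\sigma}}\ +\ \|a\|_{L^{\infty}}\|u\|_{L^{2}},
\]
and the second term does \emph{not} become small as $\sigma\to 0$. Consequently the almost conservation law carries an error term $C_{4}\|a\|_{L^{\infty}}\|u_{0}\|_{L^{2}}\|u_{0}\|_{G^{\sigma}}$ with no factor of $\sigma$, and your recursion $a_{n+1}\le e^{-2\gamma\delta}a_{n}+C\sigma^{\theta}a_{n}^{3/2}$, hence your direct contraction argument and the resulting decay at the working radius, does not close as stated.

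The repair is exactly what the paper does: first derive the exponential $L^{2}$ decay $\|u(t)\|_{L^{2}}\le \|u_{0}\|_{L^{2}}e^{-\gamma t}$ from (A1) alone, then absorb the non-small error via Young's inequality into $\tfrac{1-e^{-2\gamma\delta}}{2}\|u_{0}\|_{G^{\sigma}}^{2}+C\|u_{0}\|_{L^{2}}^{2}$, so that the induction yields only a \emph{uniform bound} $\|u(n\delta)\|_{G^{\sigma_{1}}}^{2}\le \|u_{0}\|_{G^{\sigma_{0}}}^{2}+D_{n}\|u_{0}\|_{L^{2}}^{2}$ with $D_{n}$ summable-increment, not decay. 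The exponential decay \eqref{exp} is then recovered at the halved radius by interpolation, $\|u(t)\|_{G^{\sigma_{1}/2}}\le \|u(t)\|_{L^{2}}^{1/2}\|u(t)\|_{G^{\sigma_{1}}}^{1/2}$, which is why $\tilde{\sigma_{0}}=\sigma_{1}/2$ rather than the radius at which the conservation law is run. You should either prove your stronger commutator bound under an assumption stronger than (A2), or restructure the final step along these lines; as written, the decay claim rests on an unproved estimate. (A minor additional point: the quadratic and cubic nonlinearities produce separate cubic and quartic error terms, $\sigma^{\kappa}\|u_{0}\|_{G^{\sigma}}^{3}$ and $\sigma^{\varrho}\|u_{0}\|_{G^{\sigma}}^{4}$, rather than the single cubic term you write, but this does not affect the structure of the argument.)
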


Theorems \ref{cor2} and \ref{global} will be proved in sections \ref{local} and \ref{sect1.5}, respectively.  To prove these, we first state preliminary concepts and results in section \ref{prel}.


\section{Preliminaries}\label{prel}

Before we begin our proofs, we must fix the notation to be used, as well as introduce some auxiliary spaces which will be used in the proofs.  We begin with notation.

\subsection{Notation} 

We use $a \lesssim_{c_{1}, \ldots, c_{n}}  b$ to denote
$a \leq C b$ for some constant $C > 0$ depending on $c_{1}, \ldots, c_{n}$.  If $C$ is an absolute constant, we shall write $A \lesssim B$. 
Next, we define notation for some pseudodifferential operators which will be used for our discussion.  If $f \in L^{2}(\mathbb{R}^{2})$, we denote the spatial Fourier transform by
\[
\hat{f}(\xi, t) = \int_{\mathbb{R}} f(x,t) e^{-i x \xi}\ dx,
\]
and the spacetime Fourier transform by
\[
\tilde{f}(\xi, \tau) = \int_{\mathbb{R}^{2}} f(x,t) e^{-i (x \xi + t \tau)}\ dx dt.
\]
We will also use the notation $\langle x \rangle = (1 + |x|)$.

\subsection{Function Spaces}

Now, we will present the elementary spaces used in this paper.  In addition to the Gevrey class $G^{\sigma}(\mathbb{R})$, we will also need to make use of some analytic versions of the Bourgain spaces $X_{s,b}(\mathbb{R}^{2})$.  More precisely, for $b \in \mathbb{R}$ and $\sigma > 0$, define $X_{\sigma,b}(\mathbb{R}^{2})$ to be the Banach space defined by the norm 
\begin{equation*} 
\displaystyle
\| u\|^{2}_{X_{\sigma,b}(\mathbb{R}^{2})} =\| u\|^{2}_{X_{\sigma,b}} = \int_{\mathbb{R}^{2}} e^{2\sigma|\xi |} (1 + |\tau + \phi(\xi)|)^{2b}|\tilde{u}(\xi, \tau)|^{2} d\xi d\tau,
\end{equation*}
where $\phi(\xi)=\alpha\xi^{5}-\beta\xi^{3}$.  In the special case $\sigma = 0$, we use the notation $X_{b}(\mathbb{R}^{2})$.  Also, for $T > 0$, the notation $X^{T}_{\sigma,b}(\mathbb{R}^{2})$ denotes the restricted analytic Bourgain space defined by
\begin{equation} \label{1.5}
\displaystyle
\| u\|_{X^{T}_{\sigma,b}(\mathbb{R}^{2})} =\| u\|_{X^{T}_{\sigma,b}} = \inf \left\lbrace \| v\|_{X_{\sigma,b}}: v = u \text{ on } [0,T)\times \mathbb{R} \right\rbrace. 
\end{equation}
As is well-known, $X_{\sigma,b}(\mathbb{R}^{2})$ embeds continuously into $ C\left([0,T], G^{\sigma} (\mathbb{R})\right)$, provided $b > 1/2$. 
\begin{lemma}\label{cha1lem1}
	Let $b>\frac{1}{2} $ and  $\sigma > 0$.  Then for all $u \in X_{\sigma,b}(\mathbb{R}^{2})$, we have
	\begin{equation*}
	\|u \|_{L^{\infty}_{t}G^{\sigma}_{x}} \lesssim_{b} \|u\|_{X_{\sigma,b}}.
	\end{equation*}
\end{lemma}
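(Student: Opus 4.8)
The plan is to follow the standard argument showing $X_{s,b} \hookrightarrow C_t H^s_x$ for $b > \frac{1}{2}$, adapted here to the exponential weight $e^{2\sigma|\xi|}$, which plays no essential role because it is independent of the time frequency $\tau$. First I would fix $t$ and express the spatial Fourier transform $\hat{u}(\xi,t)$ as the inverse Fourier transform in time of the spacetime transform $\tilde{u}(\xi,\tau)$, namely $\hat{u}(\xi,t) = \frac{1}{2\pi}\int_{\mathbb{R}} \tilde{u}(\xi,\tau) e^{it\tau}\, d\tau$.

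The key step is a Cauchy-Schwarz estimate in the $\tau$ variable, inserting the weight $(1+|\tau+\phi(\xi)|)^{b}$. I would write the integrand as the product of $(1+|\tau+\phi(\xi)|)^{-b}$ and $(1+|\tau+\phi(\xi)|)^{b}\,\tilde{u}(\xi,\tau)$, and apply Cauchy-Schwarz to obtain
\[
|\hat{u}(\xi,t)|^{2} \lesssim \left( \int_{\mathbb{R}} \frac{d\tau}{(1+|\tau+\phi(\xi)|)^{2b}} \right) \left( \int_{\mathbb{R}} (1+|\tau+\phi(\xi)|)^{2b}\, |\tilde{u}(\xi,\tau)|^{2}\, d\tau \right).
\]
By translation invariance of Lebesgue measure, the first factor equals $\int_{\mathbb{R}}(1+|\eta|)^{-2b}\, d\eta$, which is a finite constant $C_{b}$ precisely because $2b > 1$; crucially, this constant is independent of both $\xi$ and $t$.

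Next I would multiply through by $e^{2\sigma|\xi|}$ and integrate in $\xi$. Since the bound on $|\hat{u}(\xi,t)|^{2}$ is uniform in $t$, and since the factor $e^{2\sigma|\xi|}$ does not depend on $\tau$, Fubini's theorem lets me combine the $\xi$- and $\tau$-integrals to recover exactly the defining integral of $\|u\|_{X_{\sigma,b}}^{2}$. This yields $\|u(t)\|_{G^{\sigma}}^{2} \leq C_{b}\|u\|_{X_{\sigma,b}}^{2}$ with $C_{b}$ independent of $t$, and taking the supremum over $t$ gives the claimed bound.

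The argument is essentially routine, and I do not expect any genuine obstacle. The only points requiring care are the justification of the Fourier inversion formula and the application of Fubini, which I would handle by first proving the estimate for Schwartz functions, where all integrals converge absolutely, and then extending to general $u \in X_{\sigma,b}(\mathbb{R}^{2})$ by density. The exponential Gevrey weight is harmless here because the mechanism that produces the finite constant $C_{b}$ operates entirely in the $\tau$ integration, where the weight is merely a multiplicative constant that factors out.
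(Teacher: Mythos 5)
Your proof is correct and is precisely the standard argument that the paper implicitly relies on: the paper states this lemma without proof, citing it as well-known, and your Cauchy--Schwarz estimate in $\tau$ combined with the observation that the weight $e^{2\sigma|\xi|}$ is $\tau$-independent is the canonical way to establish it. The one point worth noting is that the same computation, applied to $u(t)-u(t')$ together with dominated convergence, also gives continuity in $t$, which is what upgrades the $L^{\infty}_{t}$ bound to membership in $C([0,T],G^{\sigma}(\mathbb{R}))$ as claimed in the surrounding text.
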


In addition to the analytic Bourgain spaces, we will also need to introduce a special class of functions, denoted by $A^{\sigma}$, which is defined by the norm
\[
\| f \|_{A^{\sigma}} = \sum_{k = 0}^{\infty}(k + 1)^{\frac{1}{4}} \frac{\sigma^{k}}{k!} \| \partial_{x}^{k} f \|_{L^{\infty}}.
\]
Note that assumption (A2) is then equivalent to the statement $a \in A^{\sigma_{0}}$.  Obviously, these spaces are nested, so that
\[
A^{\sigma} \hookrightarrow A^{\sigma'}
\]
whenever $\sigma' \leq \sigma$.  Moreover, these spaces satisfy the following product estimates, which follow from Lemmas 2.2 and 2.3 of \cite{Wang}:
\begin{lemma}\label{AProd}
    Let $\sigma > 0$.  If $a \in A^{\sigma}$ and $u \in G^{\sigma}(\mathbb{R})$, then
    \[
    \| a u \|_{G^{\sigma}} \lesssim \| a \|_{A^{\sigma}} \| u \|_{G^{\sigma}}.
    \]
\end{lemma}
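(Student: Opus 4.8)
The plan is to prove the estimate by passing to the holomorphic extensions furnished by the Paley--Wiener characterization (Theorem \ref{wiener}) and estimating the product line-by-line across the strip $S_{\sigma}$, pairing an $L^{\infty}$ bound for $a$ against an $L^{2}$ bound for $u$ via H\"older's inequality. The starting point is the quantitative form of Theorem \ref{wiener}: if $g$ admits a holomorphic extension $\tilde{g}$ to $S_{\sigma}$, then $\widehat{\tilde{g}(\cdot+iy)}(\xi)=e^{-y\xi}\hat{g}(\xi)$, so Plancherel gives $\|\tilde{g}(\cdot+iy)\|_{L^{2}_{x}}^{2}=\frac{1}{2\pi}\int_{\mathbb{R}}e^{-2y\xi}|\hat{g}(\xi)|^{2}\,d\xi$, whence
\[
\sup_{|y|<\sigma}\|\tilde{g}(\cdot+iy)\|_{L^{2}_{x}}^{2}\;\le\;\frac{1}{2\pi}\|g\|_{G^{\sigma}}^{2}\;\le\;2\sup_{|y|<\sigma}\|\tilde{g}(\cdot+iy)\|_{L^{2}_{x}}^{2},
\]
the upper bound coming from $e^{-2y\xi}\le e^{2\sigma|\xi|}$ and the lower bound from letting $y\to\mp\sigma$ on the two half-lines $\pm\xi>0$ and applying monotone convergence.

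Next I would construct the holomorphic extension of $a$ directly from the hypothesis $a\in A^{\sigma}$. Setting $\tilde{a}(x+iy)=\sum_{k\ge 0}\frac{(iy)^{k}}{k!}\partial_{x}^{k}a(x)$, the series converges absolutely and uniformly for $|y|\le\sigma$, since
\[
\sup_{|y|\le\sigma}\|\tilde{a}(\cdot+iy)\|_{L^{\infty}_{x}}\;\le\;\sum_{k\ge 0}\frac{\sigma^{k}}{k!}\|\partial_{x}^{k}a\|_{L^{\infty}}\;\le\;\sum_{k\ge 0}(k+1)^{\frac14}\frac{\sigma^{k}}{k!}\|\partial_{x}^{k}a\|_{L^{\infty}}\;=\;\|a\|_{A^{\sigma}},
\]
using $(k+1)^{1/4}\ge 1$; the sum is holomorphic on $S_{\sigma}$ and restricts to $a$ on the real axis. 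For $u$, Theorem \ref{wiener} provides a holomorphic extension $\tilde{u}$ on $S_{\sigma}$ with $\sup_{|y|<\sigma}\|\tilde{u}(\cdot+iy)\|_{L^{2}_{x}}\le(2\pi)^{-1/2}\|u\|_{G^{\sigma}}$, by the upper bound above.

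Finally I would combine these. The product $\tilde{a}\,\tilde{u}$ is holomorphic on $S_{\sigma}$ and agrees with $au$ on the real axis, so by uniqueness of analytic continuation it is the holomorphic extension $\widetilde{au}$. On each line $|y|<\sigma$, H\"older gives $\|\widetilde{au}(\cdot+iy)\|_{L^{2}_{x}}\le\|\tilde{a}(\cdot+iy)\|_{L^{\infty}_{x}}\|\tilde{u}(\cdot+iy)\|_{L^{2}_{x}}\le\|a\|_{A^{\sigma}}(2\pi)^{-1/2}\|u\|_{G^{\sigma}}$; taking the supremum over $|y|<\sigma$ and invoking the lower bound of the displayed inequality with $g=au$ yields $\|au\|_{G^{\sigma}}\lesssim\|a\|_{A^{\sigma}}\|u\|_{G^{\sigma}}$.

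I expect the only genuine subtlety --- the main obstacle --- to be the justification that the imaginary-direction Taylor series truly defines the holomorphic extension of $a$ across the full strip (not merely a formal object), together with the upgrade of Theorem \ref{wiener} from a qualitative equivalence to the two-sided quantitative bound; both become routine once the identity $\widehat{\tilde{g}(\cdot+iy)}=e^{-y\xi}\hat{g}$ is in hand, and neither uses the weight $(k+1)^{1/4}$, which is reserved for the later multilinear estimates. I would also note that the naive Fourier-side route --- dominating $\widehat{au}$ by the convolution $\bigl(e^{\sigma|\xi|}|\hat{a}|\bigr)\ast\bigl(e^{\sigma|\xi|}|\hat{u}|\bigr)$ and applying Young's inequality --- fails at the endpoint, since $\int_{\mathbb{R}}e^{\sigma|\xi|}|\hat{a}(\xi)|\,d\xi$ may diverge when $a$ is analytic in a strip of width exactly $\sigma$; it is precisely the $L^{\infty}$--$L^{2}$ H\"older pairing on each horizontal line that renders the argument lossless.
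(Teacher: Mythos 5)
Your proof is correct, but it follows a genuinely different route from the paper, which in fact supplies no argument of its own for this lemma: it simply defers to Lemmas 2.2 and 2.3 of \cite{Wang}, where the estimate is obtained on the Fourier side by expanding $e^{\sigma|\xi|}\le e^{\sigma|\eta|}\sum_{k}\sigma^{k}|\xi-\eta|^{k}/k!$ inside the convolution $\widehat{au}=\hat a*\hat u$ and trading the powers of $|\xi-\eta|$ for derivatives of $a$. Your physical-space argument --- extend $a$ across the strip $S_{\sigma}$ by its imaginary-direction Taylor series, extend $u$ by Theorem \ref{wiener}, multiply, and apply H\"older line by line --- is clean and self-contained, and the two details you flag as the only subtleties really are routine: holomorphy of $\tilde a$ follows from term-by-term differentiation once one checks $\sum_{k}\frac{(\sigma')^{k}}{k!}\|\partial_{x}^{k+1}a\|_{L^{\infty}}<\infty$ for each $\sigma'<\sigma$ (which holds because $k(\sigma'/\sigma)^{k}$ is bounded and $a\in A^{\sigma}$), and the two-sided quantitative Paley--Wiener bound follows from $\widehat{\tilde g(\cdot+iy)}=e^{-y\xi}\hat g$ together with monotone convergence as $y\to\mp\sigma$. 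Your observation that the weight $(k+1)^{1/4}$ plays no role here is also accurate, as is your diagnosis of why the naive Young's-inequality route fails. What the Fourier-side approach of \cite{Wang} buys, and the reason the paper leans on it, is uniformity of method: the companion estimate in Lemma \ref{AProd1}, set in $X_{\sigma,b'}^{\delta}$ with a modulation weight $\langle\tau+\phi(\xi)\rangle^{b'}$, does not admit a strip-and-H\"older proof, since the relevant norm is not an $L^{2}$ norm over horizontal lines; there one must work with the convolution on the frequency side (using Lemma \ref{georgiev} to distribute the modulation weights), and it is in that setting that the extra factor $(k+1)^{1/4}$ in the definition of $A^{\sigma}$ is actually consumed. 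So your proof is a valid and arguably more transparent substitute for Lemma \ref{AProd} alone, but it does not subsume the citation to \cite{Wang}, which is still needed for Lemma \ref{AProd1}.
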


\begin{lemma}\label{AProd1}
    Let $\sigma > 0$, $0 < \delta \leq 1$, $b' \leq 0$, and $b \geq 0$.  Then for all $a \in A^{\sigma}$ and $u \in G^{\sigma}$, 
    \[
    \| a u \|_{X^{\delta}_{\sigma, b'}} \lesssim \| a \|_{A^{\sigma}} \| u \|_{X_{\sigma, b}^{\delta}}.
    \]
\end{lemma}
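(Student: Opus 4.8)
The plan is to exploit the sign hypotheses $b' \le 0 \le b$ in order to strip away the modulation weights entirely, reducing the claim to the elementary spatial product estimate of Lemma \ref{AProd}. The key structural observation is that $a = a(x)$ is independent of time, so multiplication by $a$ acts only in the spatial frequency variable $\xi$: it convolves $\tilde{u}(\cdot,\tau)$ against $\hat{a}$ in $\xi$, leaving the $\tau$ variable untouched. The genuine difficulty in any product estimate in a Bourgain space is that such a convolution in $\xi$ distorts the modulation weight $(1 + |\tau + \phi(\xi)|)$, since the phase $\phi(\xi) = \alpha \xi^{5} - \beta \xi^{3}$ is far from being invariant under frequency shifts; controlling the resulting change in $\tau + \phi(\xi)$ would require real work. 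The hypotheses $b' \le 0 \le b$ are precisely what let us sidestep this, by making the relevant weights monotone so that the shift never has to be tracked.

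Concretely, I would first record two monotonicity facts for the modulation weight. Since $b' \le 0$, for every function $w$ one has $(1 + |\tau + \phi(\xi)|)^{2b'} \le 1$, hence $\| w \|_{X_{\sigma,b'}} \le \| w \|_{X_{\sigma,0}}$, and passing to the infimum over admissible extensions gives $\| w \|_{X^{\delta}_{\sigma,b'}} \le \| w \|_{X^{\delta}_{\sigma,0}}$. Likewise, since $b \ge 0$ we have $1 \le (1 + |\tau + \phi(\xi)|)^{2b}$, so $\| u \|_{X^{\delta}_{\sigma,0}} \le \| u \|_{X^{\delta}_{\sigma,b}}$. Applying the first inequality with $w = au$ reduces the lemma to the single unweighted estimate
\[
\| a u \|_{X^{\delta}_{\sigma,0}} \lesssim \| a \|_{A^{\sigma}}\, \| u \|_{X^{\delta}_{\sigma,0}},
\]
after which the second inequality upgrades the right-hand side from $b = 0$ to the desired value of $b$.

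To prove this unweighted estimate I would identify $X_{\sigma,0}$ with $L^{2}_{t} G^{\sigma}_{x}$. Indeed, by Plancherel in the time variable,
\[
\| v \|_{X_{\sigma,0}}^{2} = \int_{\mathbb{R}^{2}} e^{2\sigma|\xi|} |\tilde{v}(\xi,\tau)|^{2}\, d\xi\, d\tau \simeq \int_{\mathbb{R}} \| v(\cdot,t) \|_{G^{\sigma}}^{2}\, dt.
\]
Now let $v$ be any extension of $u$ agreeing with $u$ on $[0,\delta)$. Because $a$ is time-independent, $av$ agrees with $au$ on $[0,\delta)$ and is therefore an admissible competitor in the infimum defining $\| au \|_{X^{\delta}_{\sigma,0}}$. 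Applying Lemma \ref{AProd} at each fixed time $t$ and integrating yields
\[
\| av \|_{X_{\sigma,0}}^{2} \simeq \int_{\mathbb{R}} \| a\, v(\cdot,t) \|_{G^{\sigma}}^{2}\, dt \lesssim \| a \|_{A^{\sigma}}^{2} \int_{\mathbb{R}} \| v(\cdot,t) \|_{G^{\sigma}}^{2}\, dt \simeq \| a \|_{A^{\sigma}}^{2}\, \| v \|_{X_{\sigma,0}}^{2}.
\]
Taking the infimum over all such extensions $v$ gives $\| au \|_{X^{\delta}_{\sigma,0}} \lesssim \| a \|_{A^{\sigma}} \| u \|_{X^{\delta}_{\sigma,0}}$, which closes the reduction.

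I do not expect a serious obstacle here, precisely because the sign conditions defuse the one hard point (the distortion of the modulation weight under convolution in $\xi$). The only item demanding care is the bookkeeping of the restriction (infimum) norms; this is handled cleanly by running the estimate on an arbitrary extension $v$ of $u$ before taking the infimum, together with the remark that the time-independence of $a$ guarantees that $av$ is itself a legitimate extension of $au$.
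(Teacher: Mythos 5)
Your proof is correct. The paper itself gives no argument for this lemma, simply attributing it to Lemmas 2.2 and 2.3 of \cite{Wang}; your reduction --- dropping the modulation weights via $b' \le 0 \le b$, identifying $X_{\sigma,0}$ with $L^{2}_{t}G^{\sigma}_{x}$ by Plancherel in time, applying Lemma \ref{AProd} slicewise, and handling the restriction norms by working with an arbitrary extension before taking the infimum --- is exactly the standard argument behind that citation.
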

\noindent These spaces will be crucial in the construction of analytic solutions to the problem \eqref{kdvk}.

The final result we introduce in this section is the following basic estimate, which is Lemma 3.2 of \cite{dancona}:
\begin{lemma}\label{georgiev}
    If $a$ and $b$ are real numbers with $a \geq b$ and $a \geq 0$, then there exists a constant $C = C(a,b) > 0$ such that
    \[
    \langle x \pm y \rangle^{b} \leq C(a,b) \langle x \rangle^{a} \langle y \rangle^{b}.
    \]
    If, in addition, we have the inequality $|b| \leq a$, then $C$ can be chosen independently of $b$.
\end{lemma}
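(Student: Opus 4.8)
The plan is to reduce the inequality to two elementary sub-multiplicative estimates for the bracket $\langle \cdot \rangle = 1 + |\cdot|$ and then to split into cases according to the sign of $b$. First I would record the two facts that drive everything. The triangle inequality $|x \pm y| \le |x| + |y|$ gives
\[
\langle x \pm y \rangle = 1 + |x \pm y| \le 1 + |x| + |y| \le (1 + |x|)(1 + |y|) = \langle x \rangle \langle y \rangle,
\]
while writing $|y| = |(x \pm y) - x| \le |x \pm y| + |x|$ and arguing the same way yields the reversed bound
\[
\langle y \rangle \le \langle x \pm y \rangle \langle x \rangle, \qquad \text{equivalently} \qquad \langle x \pm y \rangle \ge \frac{\langle y \rangle}{\langle x \rangle}.
\]
Both chains of inequalities hold simultaneously for the $+$ and the $-$ sign, and both use only $\langle \cdot \rangle \ge 1$.

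Next I would treat the two sign regimes of $b$ separately. When $b \ge 0$, raising the first estimate to the $b$-th power (a monotone operation) gives $\langle x \pm y \rangle^{b} \le \langle x \rangle^{b} \langle y \rangle^{b}$; since $\langle x \rangle \ge 1$ and $b \le a$ by hypothesis, I can enlarge $\langle x \rangle^{b}$ to $\langle x \rangle^{a}$ and conclude with constant $C = 1$. When $b < 0$, the map $t \mapsto t^{b}$ is decreasing, so I apply it to the reversed bound to obtain
\[
\langle x \pm y \rangle^{b} \le \left( \frac{\langle y \rangle}{\langle x \rangle} \right)^{b} = \langle y \rangle^{b} \langle x \rangle^{|b|},
\]
and then absorb $\langle x \rangle^{|b|} \le \langle x \rangle^{a}$, again using $\langle x \rangle \ge 1$. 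This last absorption is exactly where the hypothesis $|b| \le a$ is needed, and since it introduces no numerical factor, the resulting constant is $1$ and in particular independent of $b$, which is the refinement asserted in the second part of the statement.

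The one delicate point, and the step I would flag as the \emph{main obstacle}, is the negative-exponent case: there the naive application of sub-multiplicativity $\langle x \pm y \rangle \le \langle x \rangle \langle y \rangle$ points the wrong way, so one is forced to use the reversed inequality, and this is precisely what brings the factor $\langle x \rangle^{|b|}$ (rather than $\langle x \rangle^{b}$) onto the right-hand side. Controlling that factor by $\langle x \rangle^{a}$ is possible exactly because $a \ge |b|$, so I would be careful to note that, for $b < 0$, it is the bound $|b| \le a$ rather than merely $a \ge b$ that is operative; for $b \ge 0$ the two conditions coincide since $|b| = b$. Assembling the two cases then delivers the inequality for every real $b$ with $|b| \le a$, with a constant that may in fact be taken equal to $1$ throughout.
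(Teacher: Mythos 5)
Your argument is correct, and it is worth pointing out that the paper itself gives no proof of this lemma at all --- it is simply quoted as Lemma 3.2 of the D'Ancona--Georgiev reference --- so your self-contained two-case derivation from the submultiplicativity of the bracket (i.e.\ Peetre's inequality $\langle x\pm y\rangle\le\langle x\rangle\langle y\rangle$ and its reversed form $\langle y\rangle\le\langle x\pm y\rangle\langle x\rangle$) is a genuine addition, and it even sharpens the constant to $C=1$ in the regime it covers. Each individual step checks out: the case $b\ge 0$ uses only $b\le a$ and $\langle x\rangle\ge 1$, and the case $b<0$ correctly applies the decreasing map $t\mapsto t^{b}$ to the reversed bound to produce the factor $\langle x\rangle^{|b|}$.

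One point deserves more emphasis than you gave it. Your proof establishes the inequality only under the hypothesis $|b|\le a$, whereas the first sentence of the lemma asserts it (with a constant allowed to depend on $b$) under the weaker assumptions $a\ge b$, $a\ge 0$, which admit $b<-a$. This is not a gap you could close: in that regime the statement is simply false. Taking $a=1$, $b=-5$, $x=y=N$ and the minus sign, the claimed inequality reads $1=\langle 0\rangle^{-5}\le C\,\langle N\rangle\,\langle N\rangle^{-5}=C\,\langle N\rangle^{-4}$, which fails as $N\to\infty$ (the plus sign fails similarly with $y=-N$). So the operative hypothesis throughout is $a\ge|b|$, i.e.\ $-a\le b\le a$, exactly the condition under which your two cases apply; under it your proof yields the full statement with $C=1$, in particular independent of $b$. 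You noticed that $|b|\le a$ is what is ``operative'' for $b<0$, but you should state explicitly that the lemma's first clause is misstated rather than merely unproven. Since the lemma is invoked in the paper only with $a=b=\varrho\ge 0$, nothing downstream is affected by this correction.
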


\subsection{Linear and Multilinear Estimates}\label{est}

Consider now the linear Cauchy problem
\begin{equation}\label{p2}
\begin{cases}
& \partial_{t}u + \alpha \partial_{x}^{5} u + \beta \partial_{x}^{3}u = N(x,t), \\
& u(x, 0)=u_{0}(x).
\end{cases}
\end{equation}
By using Duhamel's formula, we may write the solution
\begin{equation*}
u(x,t) = S(t)u_{0}(x) + \int_{0}^{t}S(t-t') N(x,t')\ dt',
\end{equation*}
where $S(t)$ is the solution operator, defined in terms of the Fourier transform by
\begin{equation*}
\widehat{S(t) f} = e^{-it(\alpha\xi^{5}-\beta\xi^{3})}\hat{f}.
\end{equation*}
By using the commutativity of Fourier multipliers, Lemma 2.11 and Proposition 2.12 of \cite{Tao}, it can be shown that solutions satisfy the following energy-type estimate:
\begin{lemma}\label{energy}
    If $u$ is a solution to equation \eqref{p2}, then for any smooth cutoff $\eta$, $\sigma > 0$ and $\frac{1}{2} < b < b' < 1$, we have the estimate
    \[
    \| \eta u \|_{X_{\sigma, b}} \lesssim_{\eta, b} \| u_{0} \|_{G^{\sigma}} + \delta^{b' - b}\| N \|_{X_{\sigma, b'-1}}.
    \]
\end{lemma}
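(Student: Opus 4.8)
The plan is to reduce the weighted estimate to the corresponding unweighted ($\sigma = 0$) estimate, which is the standard energy estimate for Bourgain spaces recorded in Lemma 2.11 and Proposition 2.12 of \cite{Tao}. The key observation is that the Gevrey weight $e^{\sigma|\xi|}$ is a purely spatial Fourier multiplier, and therefore commutes with every operator appearing in the problem. Concretely, I would introduce the operator $E_{\sigma}$ defined by $\widehat{E_{\sigma} f}(\xi) = e^{\sigma|\xi|}\hat{f}(\xi)$, acting on the spatial frequency variable, and record the isometric identities
\[
\|u\|_{X_{\sigma,b}} = \|E_{\sigma} u\|_{X_b}, \qquad \|u_0\|_{G^{\sigma}} = \|E_{\sigma} u_0\|_{L^2}, \qquad \|N\|_{X_{\sigma,b'-1}} = \|E_{\sigma} N\|_{X_{b'-1}},
\]
which are immediate from the definitions of the respective norms.

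Next I would verify that $E_{\sigma}$ commutes with the linear flow and with the time cutoff. Since $E_{\sigma}$ acts only on the spatial frequency, it commutes with the constant-coefficient operators $\partial_t$, $\partial_x^{3}$, $\partial_x^{5}$, and hence with the propagator $S(t)$, whose symbol $e^{-it\phi(\xi)}$ is again a function of $\xi$ alone. Moreover, because $\eta = \eta(t)$ depends only on time, multiplication by $\eta$ commutes with $E_{\sigma}$ as well. Applying $E_{\sigma}$ to Duhamel's formula and moving it through $S(t)$ and the time integral, I find that $v := E_{\sigma} u$ solves \eqref{p2} with initial data $E_{\sigma} u_0$ and forcing $E_{\sigma} N$. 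Consequently $\eta v = E_{\sigma}(\eta u)$, so the problem is transferred verbatim to the unweighted setting.

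Finally, I would invoke the classical energy estimate for $v$, which gives
\[
\|\eta v\|_{X_b} \lesssim_{\eta,b} \|E_{\sigma} u_0\|_{L^2} + \delta^{b'-b}\|E_{\sigma} N\|_{X_{b'-1}};
\]
here the homogeneous bound $\|\eta S(t)f\|_{X_b} \lesssim_{\eta,b} \|f\|_{L^2}$ follows from computing $\widetilde{\eta S(t)f}(\xi,\tau) = \hat{f}(\xi)\,\hat{\eta}(\tau+\phi(\xi))$ and using the rapid decay of $\hat{\eta}$, while the inhomogeneous bound carrying the gain $\delta^{b'-b}$ is the time-localization estimate from \cite{Tao}, with $\delta$ the width of the temporal cutoff. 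Rewriting both sides through the isometries above yields exactly the claimed inequality. The only genuine content to check is the commutation of $E_{\sigma}$ with multiplication by $\eta$ and with $S(t)$; once this bookkeeping is in place, the weighted estimate is inherited for free from the classical one. Thus the main obstacle here is not proving a new analytic inequality but rather setting up the correspondence carefully, in particular keeping $\eta$ a function of time alone so that it passes through $E_{\sigma}$.
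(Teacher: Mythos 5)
Your proposal is correct and follows essentially the same route as the paper, which likewise reduces to the unweighted case via the commutativity of the Fourier multiplier $e^{\sigma|\xi|}$ with the propagator and the temporal cutoff, and then invokes Lemma 2.11 and Proposition 2.12 of \cite{Tao}. The isometries $\| u \|_{X_{\sigma,b}} = \| E_{\sigma} u \|_{X_{b}}$ and $\| u_{0} \|_{G^{\sigma}} = \| E_{\sigma} u_{0} \|_{L^{2}}$ you record are exactly the bookkeeping the paper leaves implicit.
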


Next, we state some necessary multilinear estimates which will be necessary to prove Theorem \ref{cor2}.
\begin{lemma}\label{bilinear}
    If $\sigma > 0$ and $b > \frac{1}{2}$ is sufficiently close to $\frac{1}{2}$, and $b' > \frac{1}{2}$ then
    \[
    \| \partial_{x} (u v) \|_{X_{\sigma, b - 1}} \lesssim \| u \|_{X_{\sigma, b'}} \| v \|_{X_{\sigma, b'}}.
    \]
\end{lemma}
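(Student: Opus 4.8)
The plan is to reduce the estimate to a purely real-variable (non-analytic) bilinear estimate in the standard Bourgain spaces $X_b$, by absorbing the analytic weights $e^{\sigma|\xi|}$ into the functions. First I would pass to the Fourier side. Writing $f(\xi,\tau) = e^{\sigma|\xi|}\langle \tau+\phi(\xi)\rangle^{b'}|\tilde u(\xi,\tau)|$ and $g(\xi,\tau) = e^{\sigma|\xi|}\langle \tau+\phi(\xi)\rangle^{b'}|\tilde v(\xi,\tau)|$, so that $\|f\|_{L^2} = \|u\|_{X_{\sigma,b'}}$ and $\|g\|_{L^2} = \|v\|_{X_{\sigma,b'}}$, the left-hand side becomes (up to constants) the $L^2$-norm in $(\xi,\tau)$ of
\begin{equation*}
|\xi|\, e^{\sigma|\xi|}\langle \tau + \phi(\xi)\rangle^{b-1} \int \frac{f(\xi_1,\tau_1)\,g(\xi-\xi_1,\tau-\tau_1)}{e^{\sigma|\xi_1|}\langle\sigma_1\rangle^{b'}\,e^{\sigma|\xi-\xi_1|}\langle\sigma_2\rangle^{b'}}\, d\xi_1\, d\tau_1,
\end{equation*}
where $\sigma_1 = \tau_1+\phi(\xi_1)$ and $\sigma_2 = (\tau-\tau_1)+\phi(\xi-\xi_1)$. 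The key structural fact is the subadditivity of the weight: since $|\xi| \leq |\xi_1| + |\xi-\xi_1|$, we have $e^{\sigma|\xi|} \leq e^{\sigma|\xi_1|}e^{\sigma|\xi-\xi_1|}$, so the exponential factors cancel entirely. This is the mechanism that makes the analytic estimate follow from the $\sigma=0$ case with the same implicit constant.

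After this cancellation, the estimate is reduced to the classical bilinear estimate
\begin{equation*}
\| \partial_x(uv) \|_{X_{b-1}} \lesssim \|u\|_{X_{b'}}\|v\|_{X_{b'}}
\end{equation*}
for the Kawahara dispersion relation $\phi(\xi) = \alpha\xi^5 - \beta\xi^3$. To prove this, I would next exploit the resonance identity for the phase. Setting $\xi_2 = \xi-\xi_1$, one computes the resonance function $\tau+\phi(\xi) - \sigma_1 - \sigma_2 = \phi(\xi)-\phi(\xi_1)-\phi(\xi_2)$, and the essential gain comes from the algebraic lower bound on this quantity, which for a fifth-order dispersion behaves like $|\xi_1||\xi_2||\xi_1+\xi_2|$ times a fourth-degree factor. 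This means at least one of the three weights $\langle\tau+\phi(\xi)\rangle$, $\langle\sigma_1\rangle$, $\langle\sigma_2\rangle$ is comparable to the large quantity $|\xi|(\text{high powers})$, which is exactly what is needed to control the derivative $|\xi|$ on the left.

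The standard route is then a case analysis according to which of the three modulation weights dominates, followed by an application of the Cauchy-Schwarz inequality in the convolution variables combined with the duality/$L^4$-type or fractional-integration estimates (in the spirit of Lemma 2.11 and Proposition 2.12 of \cite{Tao}); Lemma \ref{georgiev} is used to split and recombine the Japanese-bracket weights across the convolution. The main obstacle I anticipate is the high-frequency regime where all three frequencies are large and comparable, since there the derivative loss $|\xi|$ is most severe and one must extract a full power of $|\xi|$ from the modulation weights; here the precise strength of the resonance lower bound for the quintic phase is decisive, and the hypothesis that $b$ is taken sufficiently close to $\tfrac12$ (so that $1-b$ is small) is what allows the remaining weights $\langle\sigma_1\rangle^{b'},\langle\sigma_2\rangle^{b'}$ with $b'>\tfrac12$ to absorb the leftover negative powers and close the $L^2$ integration. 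Once this worst case is handled, the remaining cases are milder and follow by the same Cauchy-Schwarz-and-duality scheme.
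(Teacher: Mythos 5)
Your reduction is exactly the paper's proof: pass to the Fourier side, use the subadditivity $e^{\sigma|\xi|}\le e^{\sigma|\xi_1|}e^{\sigma|\xi-\xi_1|}$ to absorb the analytic weight into the functions, and invoke the $\sigma=0$ bilinear estimate. The only difference is that the paper simply cites the $\sigma=0$ case as Corollary 3.3 of \cite{Jia} rather than reproving it, so your sketch of the resonance/modulation case analysis (which is the standard route, though the quintic resonance factor $5\xi_1\xi_2(\xi_1+\xi_2)(\xi_1^2+\xi_1\xi_2+\xi_2^2)$ carries a quadratic, not quartic, extra factor) is not needed.
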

\begin{proof}
The case $\sigma = 0$ is a special case of Corollary 3.3 of \cite{Jia}.  For the general case, we observe that
\[
\| \partial_{x} (u v) \|_{X_{\sigma, b-1}} = \| \xi e^{\sigma |\xi|} (1 + |\tau + \phi(\xi)|)^{b-1} \hat{u} * \hat{v} \|_{L^{2}_{\xi,\tau}}.
\]
By using the basic inequality
\[
e^{\sigma |\xi|} \leq e^{\sigma |\xi - \eta|} e^{\sigma |\eta|},
\]
it is easy to see that
\[
\| \xi e^{\sigma |\xi|} (1 + |\tau + \phi(\xi)|)^{b-1} \hat{u} * \hat{v} \|_{L^{2}_{\xi,\tau}} \leq \left\| |\xi| (1 + |\tau + \phi(\xi)|)^{b-1} u_{1} * v_{1} \right\|_{L^{2}_{\xi,\tau}},
\]
where
\[
\widehat{u_{1}} = e^{\sigma |\xi|} | \hat{u}(\xi) | \quad \text{and} \quad \widehat{v_{1}} = e^{\sigma |\xi|} | \hat{v}(\xi) |.
\]
The desired result then follows by applying the $\sigma = 0$ case to $u_{1}$ and $v_{1}$.
\end{proof}
By a similar analysis, we may prove the following trilinear estimate, which follows from Theorem 4.1 of \cite{Jia}:
\begin{lemma}\label{trilinear}
    Let $\sigma > 0$, $\frac{1}{2} < b < \frac{7}{10}$, and $b' > \frac{1}{2}$.  Then
    \[
    \| \partial_{x} \left( u_{1} u_{2} u_{3} \right) \|_{X_{\sigma, b-1}} \lesssim \| u_{1} \|_{X_{\sigma, b'}} \| u_{2} \|_{X_{\sigma, b'}} \| u_{3} \|_{X_{\sigma, b'}}.
    \]
\end{lemma}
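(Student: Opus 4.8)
The plan is to reduce the estimate to the non-analytic case $\sigma = 0$ and then invoke Theorem 4.1 of \cite{Jia}, which is precisely the trilinear estimate in the standard Bourgain space $X_{b}(\mathbb{R}^2)$; this mirrors the argument already used for Lemma \ref{bilinear}. The only genuinely new ingredient relative to the bilinear case is the bookkeeping of the exponential weight $e^{\sigma|\xi|}$ across a triple convolution rather than a double one, so I expect the proof to be short.

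First I would rewrite the left-hand side in terms of the spacetime Fourier transform. Since $u_1 u_2 u_3$ transforms into the triple convolution $\tilde{u}_1 * \tilde{u}_2 * \tilde{u}_3$, taken in both the $\xi$ and $\tau$ variables, we have
\[
\| \partial_{x}(u_1 u_2 u_3) \|_{X_{\sigma, b-1}} = \left\| |\xi| e^{\sigma|\xi|} (1 + |\tau + \phi(\xi)|)^{b-1}\, \tilde{u}_1 * \tilde{u}_2 * \tilde{u}_3 \right\|_{L^2_{\xi,\tau}}.
\]
On the support of the convolution the output frequency splits as $\xi = \xi_1 + \xi_2 + \xi_3$, so the subadditivity of $|\cdot|$ gives $|\xi| \le |\xi_1| + |\xi_2| + |\xi_3|$ and hence $e^{\sigma|\xi|} \le e^{\sigma|\xi_1|} e^{\sigma|\xi_2|} e^{\sigma|\xi_3|}$. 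I would use this to distribute the exponential across the three factors. Setting $\widetilde{u_{j,1}}(\xi,\tau) = e^{\sigma|\xi|} |\tilde{u}_j(\xi,\tau)|$ for $j = 1,2,3$, and noting that the modulation factor $(1 + |\tau + \phi(\xi)|)^{b-1}$ depends only on the output variables and is therefore unchanged when each $\tilde{u}_j$ is replaced by its modulus, this yields the pointwise domination
\[
\left| |\xi| e^{\sigma|\xi|} (1 + |\tau + \phi(\xi)|)^{b-1}\, (\tilde{u}_1 * \tilde{u}_2 * \tilde{u}_3) \right| \le |\xi| (1 + |\tau + \phi(\xi)|)^{b-1} \big( \widetilde{u_{1,1}} * \widetilde{u_{2,1}} * \widetilde{u_{3,1}} \big),
\]
and after taking $L^2_{\xi,\tau}$ norms,
\[
\| \partial_{x}(u_1 u_2 u_3) \|_{X_{\sigma, b-1}} \le \| \partial_{x}(u_{1,1} u_{2,1} u_{3,1}) \|_{X_{b-1}}.
\]

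At this stage I would apply the $\sigma = 0$ estimate of Theorem 4.1 of \cite{Jia}, valid in the stated range $\frac12 < b < \frac{7}{10}$ and $b' > \frac12$, to bound the right-hand side by $\prod_{j=1}^{3} \| u_{j,1} \|_{X_{b'}}$. To conclude, I would observe that since $e^{\sigma|\xi|}$ is real-valued and the $X_{b'}$ norm sees only the modulus of the transform, $\| u_{j,1} \|_{X_{b'}} = \| u_j \|_{X_{\sigma, b'}}$ for each $j$, which closes the estimate.

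I do not anticipate a genuine obstacle beyond the one already settled in Lemma \ref{bilinear}: all of the harmonic-analytic difficulty is absorbed into Theorem 4.1 of \cite{Jia}, while the reduction rests only on the subadditivity of the absolute value and on the insensitivity of the modulation weight to taking moduli. The one point warranting care is that the triple convolution is taken in both the spatial and temporal frequency variables, so one must check that the splitting $\xi = \xi_1 + \xi_2 + \xi_3$ employed in the exponential bound is consistent with the convolution structure; writing the convolution out explicitly makes this immediate.
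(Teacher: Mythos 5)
Your proposal is correct and follows essentially the same route as the paper, which proves the bilinear case by distributing the weight $e^{\sigma|\xi|}$ via subadditivity onto functions $u_{j,1}$ with $\widetilde{u_{j,1}} = e^{\sigma|\xi|}|\tilde{u}_j|$ and then invoking the $\sigma=0$ estimate from \cite{Jia}, and then states that the trilinear case follows ``by a similar analysis'' from Theorem 4.1 of \cite{Jia}. You have simply written out that similar analysis explicitly, including the two points the paper leaves implicit (the modulation weight is an outer multiplier unaffected by taking moduli, and $\|u_{j,1}\|_{X_{b'}} = \|u_j\|_{X_{\sigma,b'}}$), so there is nothing to add.
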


To conclude our preliminary discussion, we observe that, due to the stability of $X_{\sigma, b}(\mathbb{R}^{2})$ spaces with respect to localization in time, all of the above estimates hold with $X_{\sigma, b}(\mathbb{R}^{2})$ replaced by the restricted space $X_{\sigma, b}^{\delta}(\mathbb{R}^{2})$.  With these preliminary tools ready, we may begin the proof of Theorem \ref{cor2}.


\section{Local Well-Posedness} \label{local}

For clarity, we restate Theorem \ref{cor2} in the following, more explicit form:
\begin{proposition}[Local Well-posedness in $G^{\sigma_{0}}(\mathbb{R})$]\label{cor2restate}
	Let $\sigma_{0} > 0$.  Then for each $u_{0}^{*} \in G^{\sigma_{0}}(\mathbb{R})$, there exists a number $b > \frac{1}{2}$, a time $\delta > 0$, and an open ball $B \subset G^{\sigma_{0}}(\mathbb{R})$ containing $u_{0}^{*}$ such that for each $u_{0} \in B$, there exists a unique strong $G^{\sigma_{0}}(\mathbb{R})$ solution $u \in X_{\sigma_{0},b}(\mathbb{R}^{2})$ to the Cauchy problem \eqref{kdvk}.  Moreover, the solution satisfies the bound
	\[
	\| u \|_{L^{\infty}_{t} G^{\sigma_{0}}_{x}} \leq 2 C \| u_{0} \|_{G^{\sigma_{0}}}
	\]
	for some $C > 0$.  Finally, the solution map $u_{0} \mapsto u$ is Lipschitz continuous.
\end{proposition}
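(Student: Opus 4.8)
The plan is to construct the solution as the fixed point of the Duhamel map and to apply the contraction mapping principle in a ball of the restricted analytic Bourgain space $X^{\delta}_{\sigma_{0},b}(\mathbb{R}^{2})$. Writing $N(u) = \mu \partial_{x} u^{2} + \lambda \partial_{x} u^{3} + a(x)u$ for the full nonlinearity (with the damping term included), Duhamel's formula motivates defining, for a smooth time cutoff $\eta$ adapted to $[0,\delta]$,
\[
\Phi(u)(x,t) = \eta(t)\,S(t)u_{0}(x) - \eta(t)\int_{0}^{t} S(t-t')\,N(u)(x,t')\, dt'.
\]
The regularity exponents must be chosen to be simultaneously compatible with the energy estimate (Lemma \ref{energy}), the bilinear and trilinear estimates (Lemmas \ref{bilinear} and \ref{trilinear}), and the damping product estimate (Lemma \ref{AProd1}). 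I would fix a pair $b, b'$ with $\tfrac{1}{2} < b < b' < \tfrac{7}{10}$, both sufficiently close to $\tfrac{1}{2}$: the constraint $b' < \tfrac{7}{10}$ is imposed by the trilinear estimate, the requirement that $b'$ be close to $\tfrac{1}{2}$ by the bilinear estimate, and $b' < 1$ guarantees $b' - 1 \le 0$, so that the damping term lands in the admissible range of Lemma \ref{AProd1}.

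With these exponents fixed, Lemma \ref{energy} gives $\|\Phi(u)\|_{X_{\sigma_{0},b}} \lesssim \|u_{0}\|_{G^{\sigma_{0}}} + \delta^{b'-b}\|N(u)\|_{X_{\sigma_{0},b'-1}}$, and likewise for the restricted norms. I would then bound each piece of $N(u)$ in $X_{\sigma_{0},b'-1}$: applying Lemma \ref{bilinear} with output index $b'-1$ and input index $b$ yields $\|\partial_{x}u^{2}\|_{X_{\sigma_{0},b'-1}} \lesssim \|u\|_{X_{\sigma_{0},b}}^{2}$; Lemma \ref{trilinear} yields $\|\partial_{x}u^{3}\|_{X_{\sigma_{0},b'-1}} \lesssim \|u\|_{X_{\sigma_{0},b}}^{3}$; and Lemma \ref{AProd1}, valid since $a \in A^{\sigma_{0}}$ by assumption (A2), yields $\|au\|_{X^{\delta}_{\sigma_{0},b'-1}} \lesssim \|a\|_{A^{\sigma_{0}}}\|u\|_{X^{\delta}_{\sigma_{0},b}}$. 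Collecting these estimates on the ball $B_{R} = \{\, u : \|u\|_{X^{\delta}_{\sigma_{0},b}} \le R \,\}$ with $R = 2C\|u_{0}\|_{G^{\sigma_{0}}}$ produces a bound of the form $\|\Phi(u)\|_{X^{\delta}_{\sigma_{0},b}} \le C\|u_{0}\|_{G^{\sigma_{0}}} + C\delta^{b'-b}\bigl(R + R^{2} + R^{3} + \|a\|_{A^{\sigma_{0}}}R\bigr)$, so that choosing $\delta$ small, depending on $R$ and $\|a\|_{A^{\sigma_{0}}}$, forces $\Phi$ to map $B_{R}$ into itself.

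The contraction estimate follows the same scheme applied to differences, factoring $u^{2}-v^{2} = (u-v)(u+v)$ and $u^{3}-v^{3} = (u-v)(u^{2}+uv+v^{2})$ and invoking the multilinear estimates, together with the treatment of $a(u-v)$ via Lemma \ref{AProd1}. This gives $\|\Phi(u)-\Phi(v)\|_{X^{\delta}_{\sigma_{0},b}} \le C\delta^{b'-b}\bigl(R + R^{2} + \|a\|_{A^{\sigma_{0}}}\bigr)\|u-v\|_{X^{\delta}_{\sigma_{0},b}}$, which is a strict contraction after further shrinking $\delta$. Banach's fixed point theorem then yields the unique solution $u \in X^{\delta}_{\sigma_{0},b}$, and Lemma \ref{cha1lem1} supplies the embedding $\|u\|_{L^{\infty}_{t}G^{\sigma_{0}}_{x}} \lesssim \|u\|_{X_{\sigma_{0},b}} \le R = 2C\|u_{0}\|_{G^{\sigma_{0}}}$, the asserted bound. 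The Lipschitz dependence on the data is obtained identically: subtracting the fixed-point identities for two data $u_{0}, v_{0}$ and absorbing the small nonlinear difference into the left-hand side yields $\|u-v\|_{X^{\delta}_{\sigma_{0},b}} \lesssim \|u_{0}-v_{0}\|_{G^{\sigma_{0}}}$.

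I expect the only genuinely delicate point to be the bookkeeping of the regularity exponents, namely verifying that a single pair $(b,b')$ simultaneously satisfies the mildly conflicting hypotheses of Lemmas \ref{energy}, \ref{bilinear}, \ref{trilinear}, and \ref{AProd1}; once such a window is secured, the estimates are off-the-shelf and the contraction is driven by the standard factor $\delta^{b'-b}$. The one feature specific to this problem is the \emph{linear} damping term $a(x)u$: since it is only of first order in $u$, it cannot be absorbed by time-smallness as readily as a genuinely nonlinear term, but Lemma \ref{AProd1} handles it cleanly precisely because assumption (A2) guarantees $a \in A^{\sigma_{0}}$.
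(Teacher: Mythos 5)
Your proposal is correct and follows essentially the same route as the paper: a contraction mapping argument for the Duhamel operator in $X^{\delta}_{\sigma_{0},b}(\mathbb{R}^{2})$, using Lemma \ref{energy} for the linear part, Lemmas \ref{bilinear} and \ref{trilinear} for the quadratic and cubic terms, a product estimate for the damping term, and Lemma \ref{cha1lem1} for the final embedding into $L^{\infty}_{t}G^{\sigma_{0}}_{x}$. The only (cosmetic) divergence is that you invoke Lemma \ref{AProd1} to bound $\|au\|_{X^{\delta}_{\sigma_{0},b'-1}}$, which is in fact the estimate adapted to the Bourgain norm, whereas the paper cites Lemma \ref{AProd}; your choice is the more natural one and changes nothing in the structure of the argument.
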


The proof is a standard fixed point argument, where we define an operator $\Phi: X^{\delta}_{\sigma_{0}, b}(\mathbb{R}^{2}) \rightarrow X^{\delta}_{\sigma_{0}, b}(\mathbb{R}^{2})$, which we show to be a contraction.  We begin with the question of existence and uniqueness.

\subsection{Existence and Uniqueness}

Let $u_{0}^{*}\in G^{\sigma_{0}}(\mathbb{R})$, and fix $\delta > 0$ to be determined later.  Define
\[
B = \left\{ u_{0} \in G^{\sigma} : \| u_{0} \|_{G^{\sigma}} < 2 \| u_{0}^{*} \|_{G^{\sigma}} \right\}.
\]
Next, choose $u_{0} \in B$.  For $u \in X_{\sigma_{0}, b}^{\delta}(\mathbb{R}^{2})$, define
\[
\Phi(u) = \psi_{\delta}(t) S(t) u_{0} + \psi_{\delta}(t) \int_{0}^{t} S(t - t') N(u(t'))\ dt',
\]
where
\[
N(u) = \mu \partial_{x}(u^{2}) + \lambda \partial_{x}(u^{3}) + a(x) u.
\]
We first show that $\Phi : X^{\delta}_{\sigma_{0},b}(\mathbb{R}^{2}) \rightarrow X^{\delta}_{\sigma_{0},b}(\mathbb{R}^{2})$.  Applying Lemmas \ref{AProd}, \ref{energy}, \ref{bilinear} and \ref{trilinear}, we easily see that
\[
\| \Phi(u) \|_{X^{\delta}_{\sigma_{0},b}} \lesssim \| u_{0} \|_{G^{\sigma_{0}}} + \delta^{b' - b} \| N(u) \|_{X^{\delta}_{\sigma_{0}, b'-1}}.
\]
Recalling the definition of $N(u)$, we may apply Lemmas \ref{AProd}, \ref{bilinear} and \ref{trilinear}, to see that
\[
\| N(u) \|_{X^{\delta}_{\sigma_{0}, b'-1}} \lesssim \| a \|_{A^{\sigma}} \| u \|_{X_{\sigma_{0}, b}} + \| u \|_{X_{\sigma_{0}, b}}^{2} + \| u \|_{X_{\sigma_{0}, b}}^{3}
\]
for $b$, $b'$ sufficiently small.  We conclude that $\Phi(u) \in X_{\sigma, b}(\mathbb{R}^{2})$.

Next, we show that the map $\Phi$ is a contraction on the ball $B(0,R) \subset X^{\delta}_{\sigma_{0}, b}(\mathbb{R}^{2})$, where $R = 2 C \| u_{0} \|_{G^{\sigma_{0}}}$.  For $u, v \in B(0,R)$, we again apply Lemmas \ref{AProd}, \ref{energy}, \ref{bilinear} and \ref{trilinear} to see that
\begin{align*}
    \| \Phi(u) - \Phi(v) \|_{X^{\delta}_{\sigma_{0},b}} \leq C M \delta^{b'- b}  \| u - v \|_{X^{\delta}_{\sigma_{0}, b}},
\end{align*}
where
\begin{equation}\label{emm}
M = \| a \|_{A^{\sigma_{0}}} + \| u \|_{X^{\delta}_{\sigma_{0}, b}} + \| v \|_{X^{\delta}_{\sigma_{0}, b}} + \| u \|_{X^{\delta}_{\sigma_{0}, b}}^{2} + \| v \|_{X^{\delta}_{\sigma_{0}, b}}^{2}.
\end{equation}
Thus, if we choose
\[
\delta < C^{-\frac{1}{b' - b}} \left( \| a \|_{A^{\sigma_{0}}} + \| u \|_{X^{\delta}_{\sigma_{0}, b}} + \| v \|_{X^{\delta}_{\sigma_{0}, b}} + \| u \|_{X^{\delta}_{\sigma_{0}, b}}^{2} + \| u \|_{X^{\delta}_{\sigma_{0}, b}}^{2} \right)^{- \frac{1}{b' - b}},
\]
then $\Phi$ will satisfy
\[
\| \Phi(u) - \Phi(v) \|_{X^{\delta}_{\sigma_{0}, b}} < \| u - v \|_{X^{\delta}_{\sigma_{0}, b}}.
\]
It follows that $\Phi$ is a contraction.

To complete the proof, we note that the Contraction Mapping Theorem then implies that there exists a unique fixed point $u$, which will be the solution to the integral equation $\Phi(u) = u$.  By construction the solution will satisfy the estimate
\begin{equation}\label{embedding}
\| u \|_{X^{\delta}_{\sigma_{0, b}}} \leq 2 C \| u_{0} \|_{G^{\sigma_{0}}}.
\end{equation}
The desired estimate in Proposition \ref{cor2restate} then follows from the embedding $$X^{\delta}_{\sigma_{0}, b}(\mathbb{R}^{2}) \hookrightarrow L^{\infty}_{t} G^{\sigma_{0}}_{x}.$$  By Theorem \ref{wiener}, it follows that the desired holomorphic extension exists on the strip $S_{\sigma_{0}}$ for $t \in [0,\delta]$.  We then conclude that the analyticity of the solutions persists, at least up to time $t = \delta$.

\subsection{Continuous Dependence of the Initial Data}

Next, we would like to prove that the solution map $u_{0} \mapsto u$ is Lipschitz continuous.  By the embedding \eqref{embedding}, this will follow from the following result:
\begin{lemma} \label{cha1lem12}
	Let $\sigma$ and $b$ be as in Proposition \ref{cor2restate}, and let $u, v$, respectively, be solutions to the problem \eqref{kdvk} corresponding to initial data $u_{0}, v_{0}$, respectively, from Proposition \ref{cor2restate}.  Then there exists a constant $C > 0$ such that
	\begin{equation}
	\| u - v \|_{X^{\delta}_{\sigma, b}} \leq C \| u_{0}-v_{0}\|_{G^{\sigma}}.
	\end{equation}
\end{lemma}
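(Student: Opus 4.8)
The plan is to exploit the fact that both $u$ and $v$ arise as fixed points of the Duhamel operator $\Phi$ from the existence argument, differing only in the initial datum. Writing out the two integral equations and subtracting, I obtain
\[
u - v = \psi_{\delta}(t) S(t)(u_{0} - v_{0}) + \psi_{\delta}(t) \int_{0}^{t} S(t - t') \left[ N(u(t')) - N(v(t')) \right]\ dt'.
\]
Applying the energy estimate of Lemma \ref{energy}, in its restricted form, to this identity yields
\[
\| u - v \|_{X^{\delta}_{\sigma, b}} \lesssim \| u_{0} - v_{0} \|_{G^{\sigma}} + \delta^{b' - b} \| N(u) - N(v) \|_{X^{\delta}_{\sigma, b' - 1}},
\]
so that the problem reduces to controlling the difference of the nonlinearities by $\| u - v \|_{X^{\delta}_{\sigma, b}}$.

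For the second step, I would expand $N(u) - N(v)$ using the algebraic factorizations
\[
u^{2} - v^{2} = (u - v)(u + v), \qquad u^{3} - v^{3} = (u - v)(u^{2} + uv + v^{2}),
\]
together with $a(x)u - a(x)v = a(x)(u - v)$. Applying the bilinear estimate (Lemma \ref{bilinear}) to the quadratic difference, the trilinear estimate (Lemma \ref{trilinear}) to the cubic difference, and the product estimate (Lemma \ref{AProd1}) to the damping difference, each term produces a factor of $\| u - v \|_{X^{\delta}_{\sigma, b}}$ multiplied by norms of $u$ and $v$. This gives
\[
\| N(u) - N(v) \|_{X^{\delta}_{\sigma, b' - 1}} \lesssim M \| u - v \|_{X^{\delta}_{\sigma, b}},
\]
where $M$ is exactly the quantity appearing in \eqref{emm}; since $u, v \in B(0, R)$ with $R = 2C \| u_{0} \|_{G^{\sigma_{0}}}$, the factor $M$ is bounded by a constant depending only on $\| u_{0}^{*} \|_{G^{\sigma_{0}}}$ and $\| a \|_{A^{\sigma_{0}}}$.

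Combining the two steps gives
\[
\| u - v \|_{X^{\delta}_{\sigma, b}} \leq C_{1} \| u_{0} - v_{0} \|_{G^{\sigma}} + C_{2} M \delta^{b' - b} \| u - v \|_{X^{\delta}_{\sigma, b}}.
\]
The crucial observation is that the very choice of $\delta$ already made in the contraction step forces $C_{2} M \delta^{b' - b} < 1$; shrinking $\delta$ slightly if necessary so that this coefficient is at most $\tfrac{1}{2}$, I can absorb the last term into the left-hand side and conclude
\[
\| u - v \|_{X^{\delta}_{\sigma, b}} \leq 2 C_{1} \| u_{0} - v_{0} \|_{G^{\sigma}},
\]
which is the desired Lipschitz bound. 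I expect the only delicate point to be verifying that the same $\delta$ controls this difference estimate uniformly over the ball $B$: this hinges on the multilinear estimates being symmetric in their arguments, so that the bound for $N(u) - N(v)$ involves precisely the combination $M$ already rendered small, rather than any genuinely new quantity.
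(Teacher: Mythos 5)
Your proposal is correct and follows essentially the same route as the paper: both write $u - v = \Phi(u) - \Phi(v)$ using the fixed-point property, apply the energy estimate of Lemma \ref{energy} to reduce matters to bounding $\| N(u) - N(v) \|_{X^{\delta}_{\sigma, b'-1}}$ by $M \| u - v \|_{X^{\delta}_{\sigma,b}}$ via the multilinear estimates, and then absorb the resulting term using the smallness of $C M \delta^{b'-b}$ already secured in the contraction step. Your explicit factorizations $u^{2}-v^{2}=(u-v)(u+v)$ and $u^{3}-v^{3}=(u-v)(u^{2}+uv+v^{2})$ merely spell out a step the paper leaves implicit.
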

\begin{proof}
	If $u$ and $v$ are two solutions to \eqref{kdvk} corresponding to initial data $u_{0}$ and $v_{0}$, we may apply the energy estimate in Lemma \ref{energy} to see that
	\begin{align*}
	\|u-v\|_{X_{\sigma, b}^{\delta}} & = \|\Phi(u)-\Phi(v)\|_{X_{\sigma, b}^{\delta}} \\
	& \leq C \| u_{0} - v_{0} \|_{G^{\sigma}} + C \delta^{b' - b} \| N(u) - N(v) \|_{X^{\delta}_{\sigma, b' - 1}}.
	\end{align*}
	As was done previously, we apply Lemmas \ref{AProd}, \ref{energy}, \ref{bilinear} and \ref{trilinear} to see that
	\[
	\| N(u) - N(v) \|_{X^{\delta}_{\sigma, b' - 1}} \leq C M \| u - v \|_{X^{\delta}_{\sigma, b}},
	\]
	where $M$ is as in equation \eqref{emm}.  Thus
	\[
	\|u-v\|_{X_{\sigma, b}^{\delta}} \leq C \| u_{0} - v_{0} \|_{G^{\sigma}} + C M \delta^{b' - b} \| u - v \|_{X^{\delta}_{\sigma, b}}.
	\]
	
	Using the estimate in equation \eqref{embedding}, we may conclude that if $\delta$ is sufficiently small, then
	\[
	\|u-v\|_{X_{\sigma, b}^{\delta}} \leq C \| u_{0} - v_{0} \|_{G^{\sigma}} + \frac{1}{2} \| u - v \|_{X^{\delta}_{\sigma, b}},
	\]
	or
	\[
	\| u - v \|_{X^{\delta}_{\sigma, b}} \leq 2 C \| u_{0} - v_{0} \|_{G^{\sigma}}.
	\]
	The desired result follows.
\end{proof}


\section{A Lower Bound for the Radius of Spatial Analyticity}\label{sect1.5}

In this section, we prove Theorem \ref{global}.  By the well-known embedding $G^{\sigma_{0}}(\mathbb{R}) \hookrightarrow G^{\sigma}(\mathbb{R})$, which holds for $\sigma \leq \sigma_{0}$, our local solution from Theorem \ref{cor2} also belongs to $G^{\sigma}(\mathbb{R})$ for any $\sigma \leq \sigma_{0}$.  By a standard argument, we may extend our solutions to an arbitrarily long time interval $[0,T]$ if we can show that the $G^{\sigma}(\mathbb{R})$ norm of $u$ remains uniformly bounded in $[0,T]$.  Thus, to prove Theorem \ref{global}, it suffices to obtain the estimate \eqref{exp}.  As is often the case, this will be accomplished by deriving an approximate conservation law, which will be used to control the growth of the $G^{\sigma_{1}}(\mathbb{R})$ norm.

\subsection{Approximate Conservation Law}
We start by recalling that, for smooth, compactly supported solutions to \eqref{kdvk}, we have the identity
\[
\dfrac{d}{dt} \left[\int_{\mathbb{R}} u^{2}(x,t) dx \right] + 2 \int_{\mathbb{R}} d(x) u^{2}(x, t) dx = 0.
\]
Under the assumption (A1), we may apply Gr\"onwall's inequality to deduce that
\begin{equation}\label{cons1}
\| u(t)\|_{L^{2}} \leq \| u_{0} \|_{L^{2}} e^{- \gamma t}.
\end{equation}
By a limiting argument, the result also holds for arbitary solutions in $L^{2}(\mathbb{R})$.  From this, we obtain global well-posedness in $L^{2}(\mathbb{R})$.

In the same vein, we would like to prove an analogous result for the $G^{\sigma}(\mathbb{R})$ norms of solutions.  Explicitly, we aim to prove the following result:
\begin{proposition}\label{cha1the3}
	Let $0 < \sigma \leq \sigma_{0}$, $\kappa \in [0, 1]$, $\varrho \in [0, \frac{1}{4}]$ and let $0 < \delta < 1$ be as in Theorem \ref{cor2}.  If $u \in X^{\delta}_{\sigma, b}(\mathbb{R}^{2})$ is a solution to the Cauchy problem (\ref{kdvk}) on the time interval $[0, \delta]$, then the following estimate holds:
	\begin{align*}
	\| u(t)\|^{2}_{G^{\sigma}} & \leq e^{-2\gamma t} \| u_{0}\|^{2}_{G^{\sigma}} + C_{1} \sigma^{\kappa} \| u_{0} \|^{3}_{G^{\sigma}} + C_{2} \sigma^{\varrho} \| u_{0} \|^{4}_{G^{\sigma}}\\ & \quad + C_{3} \sigma \| a \|_{A^{\sigma}}\| u_{0} \|^{2}_{G^{\sigma}} + C_{4} \| a \|_{L^{\infty}} \|u_{0}\|_{L^{2}} \|u_{0} \|_{G^{\sigma}}.
	\end{align*}
\end{proposition}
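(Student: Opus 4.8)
The plan is to run a weighted energy estimate and close it with a Grönwall argument. Introduce the Fourier multiplier $e^{\sigma|D_{x}|}$ with symbol $e^{\sigma|\xi|}$ and set $V = e^{\sigma|D_{x}|}u$, so that $\|u(t)\|_{G^{\sigma}} = \|V(t)\|_{L^{2}}$ and $V$ is real whenever $u$ is. Applying $e^{\sigma|D_{x}|}$ to \eqref{kdvk} and pairing in $L^{2}$ with $V$, I would first note that the dispersive contribution drops out: since $\alpha(i\xi)^{5}+\beta(i\xi)^{3} = i\phi(\xi)$ is purely imaginary and $e^{\sigma|\xi|}$ is even, the odd-order terms do not affect $\frac{d}{dt}\|V\|_{L^{2}}^{2}$. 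This leaves the identity
\[
\tfrac{1}{2}\tfrac{d}{dt}\|V\|_{L^{2}}^{2} = -\mu\,\langle V, e^{\sigma|D_{x}|}\partial_{x}(u^{2})\rangle - \lambda\,\langle V, e^{\sigma|D_{x}|}\partial_{x}(u^{3})\rangle - \langle V, e^{\sigma|D_{x}|}(au)\rangle,
\]
where $\langle\cdot,\cdot\rangle$ denotes the real $L^{2}$ pairing.

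Second, I would isolate the damping. Writing $\hat{V}(\xi) = e^{\sigma|\xi|}\hat{u}(\xi)$ and comparing the symbol $e^{2\sigma|\xi|}$ against $e^{\sigma|\xi|}e^{\sigma|\eta|}$ on the frequency-convolution variables, the damping pairing splits as $\langle V, aV\rangle$ plus a commutator error generated by the factor $e^{\sigma|\xi|}-e^{\sigma|\eta|}$. By assumption (A1), $\langle V, aV\rangle = \int_{\mathbb{R}} a V^{2}\,dx \geq \gamma\|V\|_{L^{2}}^{2}$, so that
\[
\tfrac{1}{2}\tfrac{d}{dt}\|V\|_{L^{2}}^{2} + \gamma\|V\|_{L^{2}}^{2} \leq R(t),
\]
with $R(t)$ collecting the two nonlinear pairings and the commutator error. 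Multiplying by the integrating factor $e^{2\gamma t}$ and integrating over $[0,t]\subset[0,\delta]$ yields
\[
\|V(t)\|_{L^{2}}^{2} \leq e^{-2\gamma t}\|u_{0}\|_{G^{\sigma}}^{2} + 2\int_{0}^{\delta} R(s)\,ds,
\]
since $e^{-2\gamma(t-s)}\leq 1$. It then remains to bound each piece of $\int_{0}^{\delta} R$ by the corresponding term in the proposition, using the local bound $\|u\|_{X^{\delta}_{\sigma,b}} \lesssim \|u_{0}\|_{G^{\sigma}}$ from Proposition \ref{cor2restate}.

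Third, I would estimate the nonlinear pieces, which I expect to be the main obstacle. Rewriting $u = e^{-\sigma|D_{x}|}V$, the bilinear pairing becomes an integral over $\xi_{1}+\xi_{2}+\xi_{3}=0$ of $\hat{V}(\xi_{1})\hat{V}(\xi_{2})\hat{V}(\xi_{3})$ against the symmetrized multiplier built from $\xi\,e^{\sigma(|\xi_{1}+\xi_{2}|-|\xi_{1}|-|\xi_{2}|)}$. The essential point is that at $\sigma = 0$ this symmetrized multiplier is proportional to $\xi_{1}+\xi_{2}+\xi_{3}=0$ and hence vanishes, reflecting the exact identity $\int_{\mathbb{R}} u\,\partial_{x}(u^{2})\,dx = 0$. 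A multiplier estimate then shows the symbol is $O(\sigma^{\kappa})$ on the relevant region, and combined with the bilinear estimate of Lemma \ref{bilinear} in its $X^{\delta}_{\sigma,b}$ form this gives $|\int_{0}^{\delta}\langle V, e^{\sigma|D_{x}|}\partial_{x}(u^{2})\rangle\,ds| \lesssim \sigma^{\kappa}\|u\|_{X^{\delta}_{\sigma,b}}^{3}$. The identical scheme with Lemma \ref{trilinear} treats $\partial_{x}(u^{3})$, producing a $\sigma^{\varrho}$ gain and a fourth power. The delicate part is making the $\sigma^{\kappa}$ and $\sigma^{\varrho}$ extraction compatible with the Bourgain-space estimates: one must quantify the near-diagonal cancellation uniformly in the frequencies while retaining enough of the exponential weights to invoke Lemmas \ref{bilinear} and \ref{trilinear}.

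Finally, for the commutator error I would split the multiplier $e^{\sigma|\xi|}-e^{\sigma|\eta|}$ into a high-frequency regime, where a Taylor expansion in $|\xi|-|\eta|$ (bounded by $|\xi-\eta|$, the frequency carried by $a$) produces a factor of $\sigma$ and pairs the resulting $|\xi-\eta|^{k}$ powers with $\partial_{x}^{k}a$ to reconstruct $\|a\|_{A^{\sigma}}$, giving the bound $\sigma\|a\|_{A^{\sigma}}\|u\|_{G^{\sigma}}^{2}$; and a low-frequency remainder near the singularity of $|\xi|$, where no $\sigma$ gain is available, so one places all the exponential weight on $V$, bounds $a$ crudely by $\|a\|_{L^{\infty}}$, and leaves the remaining factor of $u$ in $L^{2}$, yielding $\|a\|_{L^{\infty}}\|u\|_{L^{2}}\|u\|_{G^{\sigma}}$. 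Integrating over $[0,\delta]$ and applying the local bound together with the $L^{2}$ decay \eqref{cons1} then produces the terms $C_{3}\sigma\|a\|_{A^{\sigma}}\|u_{0}\|_{G^{\sigma}}^{2}$ and $C_{4}\|a\|_{L^{\infty}}\|u_{0}\|_{L^{2}}\|u_{0}\|_{G^{\sigma}}$, completing the estimate.
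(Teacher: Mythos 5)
Your proposal is correct and follows essentially the same route as the paper: the multiplier $V=\Lambda^{\sigma}u$, the exact cancellation of the dispersive and symmetrized nonlinear pairings, the extraction of $\sigma^{\kappa}$ and $\sigma^{\varrho}$ from the exponential-weight commutators via the bilinear and trilinear estimates, the two-part bound $\sigma\|a\|_{A^{\sigma}}\|u\|_{G^{\sigma}}^{2}+\|a\|_{L^{\infty}}\|u\|_{L^{2}}\|u\|_{G^{\sigma}}$ for the damping commutator (which the paper simply cites from Wang's Lemma 3.3), and Gr\"onwall with the factor $e^{-2\gamma t}$. The only cosmetic difference is that the paper bounds the pairings by duality in $X_{0,1-b}\times X_{0,b-1}$ before invoking the multilinear estimates, a step your sketch absorbs into the multiplier bound.
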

\begin{proof}
Let $\kappa, \sigma, T, b$ and $u$ be as in the statement of the theorem.  For simplicity of notation, let us define a pseudodifferential operator $\Lambda^{\sigma}$ in terms of its Fourier transform:
\[
\widehat{\Lambda^{\sigma}u}(\xi, t) = e^{\sigma|\xi |}\widehat{u}(\xi, t).
\]
Additionally, it will be useful to define
\begin{align*}
\Delta(u) & = \partial_{x} \left[ (\Lambda^{\sigma}u)^{2} - \Lambda^{\sigma} (u^{2}) \right], \\
\Theta(u) & = \partial_{x} \left[ (\Lambda^{\sigma} u)^{3} - \Lambda^{\sigma} (u^{3}) \right], \\
\Gamma(u) & = \left[ d \Lambda^{\sigma} u - \Lambda^{\sigma} (du) \right].
\end{align*}
When there is no danger of confusion, we will omit the $u$ from the notation.

To start, define the auxiliary function $V(t, x) = \Lambda^{\sigma}u(t, x)$. Since $u$ is real-valued, then $V$ must also be real-valued.  Applying the pseudodifferential operator $\Lambda^{\sigma}$ to equation (\ref{p1}), it is easily seen that we obtain
\begin{equation*}
\partial_{t} V + \alpha \partial_{x}^{5} V + \beta \partial_{x}^{3} V + \mu \Lambda^{\sigma} \partial_{x} u^{2} + \lambda \Lambda^{\sigma}\partial_{x}u^{3} + \Lambda^{\sigma} (d u) = 0.
\end{equation*}
Using the notation above, it is easy to see that this equation is equivalent to
\begin{equation*}
\partial_{t} V + \alpha \partial_{x}^{5} V + \beta \partial_{x}^{3} V + \mu \partial_{x} V^{2} + \lambda \partial_{x} V^{3} + d V = \mu \Delta + \lambda \Theta + \Gamma.
\end{equation*}
Multiplying this equation by $V$ and integrating in space, we obtain
\begin{equation}\label{1.1.23}
\begin{aligned}
& \int_{\mathbb{R}}V\partial_{t} V\ dx + \alpha \int_{\mathbb{R}}V \partial_{x}^{5} V\ dx + \beta \int_{\mathbb{R}}V \partial_{x}^{3} V\ dx + \mu \int_{\mathbb{R}} V \partial_{x}V^{2}\ dx\ + \\
& \ + \lambda \int_{\mathbb{R}}V\partial_{x} V^{3}\ dx + \int_{\mathbb{R}} d V^{2} \ dx = \mu \int_{\mathbb{R}} V \Delta\ dx+\lambda \int_{\mathbb{R}}\ V \Theta \  dx + \int_{\mathbb{R}} V \Gamma \ dx.
\end{aligned}
\end{equation}

To simplify this equation, let us recall the following fact\footnote{See the footnote on page 1018 of \cite{Selberg1}.}: for any $j \in \mathbb{N}$, we have
\[
|\partial_{x}^{j} V(x,t)| \rightarrow 0 \quad \text{as} \quad |x| \rightarrow \infty.
\]
We may thus apply integration by parts to see that
\[
\int_{\mathbb{R}}V \partial_{x}^{5} V\ dx = \int_{\mathbb{R}} V \partial_{x}^{3} V\ dx = \int_{\mathbb{R}} V \partial_{x}V^{2}\ dx\ = \int_{\mathbb{R}}V\partial_{x} V^{3}\ dx = 0.
\]
Equation \eqref{1.1.23} can be thus be rewritten as
\[
\frac{1}{2}\frac{d}{dt}\int_{\mathbb{R}}V^{2} dx+\int_{\mathbb{R}} d(x) V^{2} \ dx = \mu \int_{\mathbb{R}} V \Delta \ dx + \lambda \int_{\mathbb{R}} V \Theta\  dx + \int_{\mathbb{R}} V \Gamma \ dx.
\]
Since $d(x) \geq \gamma > 0$ for all $x \in \mathbb{R}$ we get
\begin{equation}\label{gronsetup}
\frac{d}{dt} \int_{\mathbb{R}}V^{2} \  dx + 2 \gamma \int_{\mathbb{R}} V^{2} \ dx \leq 2 \mu \int_{\mathbb{R}} V \Delta \ dx + 2 \lambda \int_{\mathbb{R}} V \Theta \  dx + 2 \int_{\mathbb{R}} V \Gamma \ dx.
\end{equation}
Define
\[
M(t) = 2 \mu \int_{\mathbb{R}} V \Delta \ dx + 2 \lambda \int_{\mathbb{R}} V \Theta \  dx + 2 \int_{\mathbb{R}} V \Gamma \ dx.
\]
Equation \eqref{gronsetup} can then be written as
\[
\frac{d}{dt} \| u(t) \|_{G^{\sigma}}^{2} \leq - 2 \gamma \| u(t) \|_{G^{\sigma}}^{2} + M(t).
\]
Gr\"onwall's inequality then yields
\begin{equation}\label{gron}
\| u(t)\|^{2}_{G^{\sigma}} \leq e^{-2\gamma t} \| u(0) \|^{2}_{G^{\sigma}} + \int_{0}^{t} M(s) e^{-\gamma (t-s)}\ ds
\end{equation}
for $t \in [0,\delta]$.

To proceed, let us observe that for $t \in [0,\delta]$, we may rewrite the integral on the right-hand side of equation \eqref{gron} as
\begin{align*}
\int_{0}^{t} M(s) e^{-\gamma (t - s)}\ ds & = \int_{\mathbb{R}^{2}} \chi_{[0,\delta]}(t) \left[ 2 \mu V \Delta + 2 \lambda  V \Theta + 2 V \Gamma \right] e^{- \gamma (t - s)} \ dx ds \\
& = \mathcal{I}_{1} + \mathcal{I}_{2} + \mathcal{I}_{3}.
\end{align*}
We first estimate $\mathcal{I}_{1}$.  Recalling that $V$ is real-valued, it follows from Parseval's identity that
\begin{align*}
    |\mathcal{I}_{1}| & = 2 |\mu| \left| \int_{\mathbb{R}^{2}} \chi_{[0,\delta]}(t) e^{-\gamma(t - s)} V \chi_{[0,\delta]} \Delta\ dx ds \right| \\
    & = 2 |\mu| \left| \int_{\mathbb{R}^{2}} \overline{\chi_{[0,\delta]}(t) e^{-\gamma(t - s)} V} \chi_{[0,\delta]} \Delta\ dx ds \right| \\
    & = 2 |\mu| \left| \int_{\mathbb{R}^{2}} \overline{\widehat{\chi_{[0,\delta]} e^{-\gamma(t - s)} V}} \widehat{\chi_{[0,\delta]} \Delta}\ d\xi d\tau \right| \\
    & = 2 |\mu| \left| \int_{\mathbb{R}^{2}} (1 + |\tau + \phi(\xi)|)^{1-b} \overline{\widehat{\chi_{[0,\delta]} e^{-\gamma(t - s)} V}} (1 + |\tau + \phi(\xi)|)^{b-1}\widehat{\chi_{[0,\delta]} \Delta}\ d\xi d\tau \right|
\end{align*}
H\"older's inequality then yields that
\[
| \mathcal{I}_{1} | \leq 2 |\mu| \| \chi_{[0,\delta]} e^{-\gamma(t - \cdot)} V \|_{X_{0,1 - b}} \| \chi_{[0,\delta]} \Delta \|_{X_{0,b-1}}.
\]
An analogous computation will show that
\[
| \mathcal{I}_{2} | \leq 2 |\lambda| \| \chi_{[0,\delta]} e^{-\gamma(t - \cdot)} V \|_{X_{0,1 - b}} \| \chi_{[0,\delta]} \Theta \|_{X_{0,b-1}}.
\]

Next, observe that we have both $-\frac{1}{2} < b - 1 < \frac{1}{2}$ and $-\frac{1}{2} < 1 - b < \frac{1}{2}$. Therefore, one can use apply Proposition 2.1 and Lemma 3.1 of \cite{Wang} to obtain	
\begin{align*}
    | \mathcal{I}_{1} | & \leq 2 C_{1} |\mu| \| V \|_{X^{\delta}_{0,1 - b}} \| \Delta \|_{X^{\delta}_{0,b-1}} \\
    & \leq 2 C_{1} |\mu| \| u \|_{X^{\delta}_{\sigma, b}} \| \Delta \|_{X^{\delta}_{0,b - 1}},
\end{align*}
and
\begin{equation}\label{thetanorm}
\begin{aligned}
    | \mathcal{I}_{2} | & \leq 2 C_{2} | \lambda | \| V \|_{X^{\delta}_{0,1 - b}} \| \Theta \|_{X^{\delta}_{0,b - 1}} \\
    & \leq 2 C_{2} |\mu| \| u \|_{X^{\delta}_{\sigma, b}} \| \Theta \|_{X^{\delta}_{0,b - 1}}.
\end{aligned}
\end{equation}
To estimate the norms of $\Delta$ and $\Theta$, we apply the following well-known trick: for any $\kappa \in [0,1]$, the inequality
\begin{equation}\label{expest1}
e^{\sigma |\xi - \xi'|}e^{\sigma |\xi'|} - e^{\sigma |\xi|} \leq C \min\{ |\xi - \xi'|, |\xi'| \}^{\kappa} \sigma^{\kappa} e^{\sigma |\xi - \xi'|}e^{\sigma |\xi'|}
\end{equation}
holds for some $C > 0$.  Moreover, it is easy to see that
\begin{equation}\label{expest2}
\min\{ |\xi - \xi'|, |\xi'| \} \lesssim \frac{\langle \xi - \xi' \rangle \langle \xi' \rangle}{\langle \xi \rangle}.
\end{equation}
Applying Corollary 3.3 of \cite{Jia}, it then follows that
\begin{align*}
    \| \Delta \|_{X^{\delta}_{0, b - 1}} & \lesssim \sigma^{\kappa} \left\| \frac{i \xi}{\langle \xi \rangle^{\kappa} \langle \tau + \phi(\xi) \rangle^{1 - b}} \widehat{\langle \nabla \rangle^{\kappa} V} * \widehat{\langle \nabla \rangle^{\kappa} V} \right\|_{L^{2}_{\xi, \tau}} \\
    & \lesssim \sigma^{\kappa} \| V \|_{X^{\delta}_{0, b}}^{2} \\
    & \lesssim \sigma^{\kappa} \| u \|_{X^{\delta}_{\sigma, b}}.
\end{align*}
where
\[
\langle \nabla \rangle^{\kappa} V = \mathfrak{F}^{-1} \left( \langle \xi \rangle^{\kappa} \tilde{V}(\xi,\tau) \right).
\]
We thus conclude that
\[
| \mathcal{I}_{1} | \leq C_{1} \sigma^{\kappa} \| u \|_{X^{\delta}_{\sigma, b}}^{3}
\]
for some $C_{1} > 0$.

Next, we estimate $\mathcal{I}_{2}$.  From equation \eqref{thetanorm}, it suffices to estimate $\| \Theta \|_{X^{\delta}_{0, b - 1}}$.  To estimate the norm of $\Theta$, observe that
\begin{align*}
\widehat{\Theta} & = i \xi \left[ \hat{V}*\hat{V}*\hat{V} - e^{\sigma |
\xi|} (\hat{u} * \hat{u} * \hat{u}) \right] \\
& = i \xi \int_{\mathbb{R}^{2}} \left( e^{\sigma |\xi - \xi_{1}|} e^{\sigma |\xi_{1} - \xi_{2}|} e^{\sigma |\xi_{2}|} - e^{\sigma |\xi|} \right) \hat{u}(\xi - \xi_{1}) \hat{u}(\xi_{1} - \xi_{2}) \hat{u}(\xi_{2})\ d\xi_{1} \xi_{2}.
\end{align*}
Applying the estimates in equations \eqref{expest1} and \eqref{expest2} twice, we obtain
\begin{align*}
e^{\sigma |\xi - \xi_{1}|} e^{\sigma |\xi_{1} - \xi_{2}|} e^{\sigma |\xi_{2}|} - e^{\sigma |\xi|} & \lesssim \sigma^{\varrho} \left( \min\{ |\xi - \xi_{1}|, |\xi_{1}| \}^{\varrho} + \min\{ |\xi_{1} - \xi_{2}|, |\xi_{2}| \}^{\varrho} \right) \times \\
& \quad \times e^{\sigma |\xi - \xi_{1}|} e^{\sigma |\xi_{1} - \xi_{2}|} e^{\sigma |\xi_{2}|} \\
& \lesssim \sigma^{\varrho}\left(\frac{\langle \xi - \xi_{1} \rangle^{\varrho} \langle \xi_{1} \rangle^{\varrho}}{\langle \xi \rangle^{\varrho}} + \frac{\langle \xi_{1} - \xi_{2} \rangle^{\varrho} \langle \xi_{2} \rangle^{\varrho}}{\langle \xi_{1} \rangle^{\varrho}} \right) \times \\ 
& \quad \times e^{\sigma |\xi - \xi_{1}|} e^{\sigma |\xi_{1} - \xi_{2}|} e^{\sigma |\xi_{2}|}.
\end{align*}
If we now apply Lemma \ref{georgiev}, then we may obtain the estimates
\begin{align*}
    \frac{\langle \xi - \xi_{1} \rangle^{\varrho} \langle \xi_{1} \rangle^{\varrho}}{\langle \xi \rangle^{\varrho}} & \lesssim \frac{\langle \xi - \xi_{1} \rangle^{\varrho} \langle \xi_{1} - \xi_{2} \rangle^{\varrho} \langle \xi_{2} \rangle^{\varrho}}{\langle \xi \rangle^{\varrho}}, \\
    \frac{\langle \xi_{1} - \xi_{2} \rangle^{\varrho} \langle \xi_{2} \rangle^{\varrho}}{\langle \xi_{1} \rangle^{\varrho}} & \lesssim \frac{\langle \xi - \xi_{1} \rangle^{\varrho} \langle \xi_{1} - \xi_{2} \rangle^{\varrho} \langle \xi_{2} \rangle^{\varrho}}{\langle \xi \rangle^{\varrho}}.
\end{align*}
We thus conclude that
\[
e^{\sigma |\xi - \xi_{1}|} e^{\sigma |\xi_{1} - \xi_{2}|} e^{\sigma |\xi_{2}|} - e^{\sigma |\xi|} \lesssim \sigma^{\varrho} \frac{\langle \xi - \xi_{1} \rangle^{\varrho} \langle \xi_{1} - \xi_{2} \rangle^{\varrho} \langle \xi_{2} \rangle^{\varrho}}{\langle \xi \rangle^{\varrho}}e^{\sigma |\xi - \xi_{1}|} e^{\sigma |\xi_{1} - \xi_{2}|} e^{\sigma |\xi_{2}|}.
\]

Proceeding as we did in the estimate for $\mathcal{I}_{1}$, we may thus use the estimates above to see that
\[
\| \Theta \|_{X^{\delta}_{0, b-1}} \lesssim \sigma^{\varrho} \| \langle \nabla \rangle^{-\varrho} \partial_{x} \left( W^{3} \right) \|_{X^{\delta}_{0, b-1}},
\]
where
\[
W = \langle \nabla \rangle^{\varrho} \Lambda^{\sigma}u.
\]
Applying Theorem 4.1 of \cite{Jia}, we see that
\begin{align*}
    \| \langle \nabla \rangle^{-\varrho} \partial_{x} \left( W^{3} \right) \|_{X^{\delta}_{0, b-1}} & \lesssim \| \langle \nabla \rangle^{-\varrho} W \|_{X^{\delta}_{0, b}}^{3} \\
    & \lesssim \| u \|_{X^{\delta}_{\sigma, b}}^{3}.
\end{align*}
We thus conclude that
\[
| \mathcal{I}_{2} | \leq C_{2} \sigma^{\varrho} \| u \|_{X^{\delta}_{\sigma, b}}^{4}
\]
for some $C_{2} > 0$.

Finally, it remains to estimate $\mathcal{I}_{3}$.  From the definition, we have
\[
\mathcal{I}_{3} \leq 2 \int_{0}^{t} e^{-2 \gamma (t - s)}\|\Gamma(s)\|_{L^{2}(\mathbb{R})}\| u(s) \|_{G^{\sigma}(\mathbb{R})} ds \\
\]
If we apply Lemma 3.3 of \cite{Wang}, we obtain that
\[
\| \Gamma(s) \|_{L^{2}} \lesssim \frac{\sigma}{\sigma_{0}} \| a \|_{A^{\sigma_{0}}} \| u(s) \|_{G^{\sigma}} + \| a \|_{L^{\infty}} \| u(s) \|_{L^{2}}
\]
for $0 \leq \sigma \leq \sigma_{0}$.  This yields the estimate
\[
\mathcal{I}_{3}  \lesssim \int_{0}^{t} e^{- 2 \gamma (t - s)} \left( \frac{\sigma}{\sigma_{0}} \| a \|_{A^{\sigma_{0}}} \| u(s) \|_{G^{\sigma}}^{2} + \| a \|_{L^{\infty}} \| u(s) \|_{L^{2}} \| u(s) \|_{G^{\sigma}} \right)\ ds,
\]
which we rewrite as
\[
\mathcal{I}_{3} \lesssim  \int_{\mathbb{R}^{2}} \chi_{[0,\delta]}(t) e^{-2 \gamma (t - s)} \left( \frac{\sigma}{\sigma_{0}} \| a \|_{A^{\sigma_{0}}} \| u(s) \|_{G^{\sigma}}^{2} + \| a \|_{L^{\infty}} \| u(s) \|_{L^{2}} \| u(s) \|_{G^{\sigma}} \right)\ ds.
\]
Proceeding as we did for $\mathcal{I}_{1}$ and $\mathcal{I}_{2}$, we may again apply Proposition 2.1 and Lemma 3.1 of \cite{Wang}, as well as equations \eqref{embedding} and \eqref{cons1}, we obtain
\[
\mathcal{I}_{3} \lesssim \sigma \| a \|_{A^{\sigma_0}} \| u_{0} \|^{2}_{G^{\sigma}} + \| a \|_{L^{\infty}} \| u_{0} \|_{L^{2}} \| u_{0} \|_{G^{\sigma}}.
\]
We may therefore conclude  that
\begin{align*}
\| u(t)\|^{2}_{G^{\sigma}}& 
\leq e^{-2\gamma t}\| u_{0} \|^{2}_{G^{\sigma}} + \mathcal{I}_{1} + \mathcal{I}_{2} + \mathcal{I}_{3} \\
& \leq e^{-2\gamma t} \| u_{0}\|^{2}_{G^{\sigma}} + C_{1} \sigma^{\kappa} \| u_{0} \|^{3}_{G^{\sigma}} + C_{2} \sigma^{\varrho} \| u_{0} \|^{4}_{G^{\sigma}}\\ & \quad + C_{3} \sigma \| a \|_{A^{\sigma_0}}\| u_{0} \|^{2}_{G^{\sigma}} + C_{4} \| a \|_{L^{\infty}} \|u_{0}\|_{L^{2}} \|u_{0} \|_{G^{\sigma}}
\end{align*}
The proof is now complete.
\end{proof}

\subsection{Proof of Theorem \ref{global}}

Using the results from the previous section, we may now begin the proof of Theorem \ref{global}.  The main idea of the proof is to use the almost conservation law of Proposition \ref{cha1the3} to control the growth of the $G^{\sigma}(\mathbb{R})$ norm of $u$ on intervals of the form $[0, n \delta]$.  Since we may find $n$ such that
\[
[0, n \delta] \subset [0, T] \subset [0, (n+1)\delta],
\]
it suffices to show that the norm remains bounded on $[0, n \delta]$ for any $n$.  In fact, it suffices to show that the norm of the solution is bounded at $t = n \delta$ for all $n \in \mathbb{N}$.  This is the content of the following theorem.
\begin{lemma}
    Let $u$ be the local solution from Theorem \ref{cor2}.  Then there exists $\sigma_{1} > 0$, such that for each $n \in \mathbb{N}$, there exist constants $D_{n} > 0$, depending on $\delta$, $\| u_{0} \|_{G^{\sigma}}$, and $\| a \|_{L^{\infty}}$, satisfying
    \[
    \| u(n \delta) \|_{G^{\sigma_{1}}}^{2} \leq \| u_{0} \| _{G^{\sigma_{0}}}^{2} + D_{n} \| u_{0} \|_{L^{2}}^{2}.
    \]
\end{lemma}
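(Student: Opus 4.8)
The plan is to iterate the approximate conservation law of Proposition \ref{cha1the3} across the successive time slabs $[m\delta,(m+1)\delta]$ for $m = 0, 1, \ldots, n-1$, and to close the resulting scalar recurrence by a bootstrap argument in which the nonlinear increments are absorbed into the exponential damping factor rather than accumulated additively. Since the coefficient $a(x)$ and the symbol $\phi(\xi)$ do not depend on $t$, the problem \eqref{kdvk} is autonomous, so Proposition \ref{cha1the3} applies verbatim on each slab with $u(m\delta)$ in the role of the initial datum. Fixing the radius $\sigma = \sigma_{1} \in (0,\sigma_{0}]$ to be selected below and writing $E_{m} = \| u(m\delta)\|_{G^{\sigma_{1}}}^{2}$, this yields
\begin{align*}
\| u((m+1)\delta)\|_{G^{\sigma_1}}^{2} &\leq e^{-2\gamma\delta}\| u(m\delta)\|_{G^{\sigma_1}}^{2} + C_{1}\sigma_{1}^{\kappa}\| u(m\delta)\|_{G^{\sigma_1}}^{3} + C_{2}\sigma_{1}^{\varrho}\| u(m\delta)\|_{G^{\sigma_1}}^{4} \\
&\quad + C_{3}\sigma_{1}\| a\|_{A^{\sigma_1}}\| u(m\delta)\|_{G^{\sigma_1}}^{2} + C_{4}\| a\|_{L^{\infty}}\| u(m\delta)\|_{L^{2}}\| u(m\delta)\|_{G^{\sigma_1}}.
\end{align*}
Invoking the $L^{2}$ decay \eqref{cons1} in the shifted form $\| u(m\delta)\|_{L^{2}} \leq \| u_{0}\|_{L^{2}} e^{-\gamma m\delta}$ turns the last term into an inhomogeneous forcing that decays geometrically in $m$, so the estimate takes the shape $E_{m+1} \leq \theta_{m} E_{m} + B_{m}$.

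The crux is to choose a single $\sigma_{1}$, depending only on $\| u_{0}\|_{G^{\sigma_0}}$, $\sigma_{0}$, and the fixed quantities $C_{1},\dots,C_{4}$, $\gamma$, $\delta$, $\| a\|_{A^{\sigma_0}}$, that makes every step contractive. I would argue by induction on $n$ under the bootstrap hypothesis $E_{m} \leq \Lambda$ for $m = 0, \ldots, n$, where $\Lambda$ is a fixed multiple of $R^{2} := \| u_{0}\|_{G^{\sigma_0}}^{2}$; the base case is immediate from the embedding $G^{\sigma_0}(\mathbb{R}) \hookrightarrow G^{\sigma_1}(\mathbb{R})$, which gives $\| u_{0}\|_{G^{\sigma_1}} \leq \| u_{0}\|_{G^{\sigma_0}}$. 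Under this hypothesis the cubic and quartic factors obey $\| u(m\delta)\|_{G^{\sigma_1}}^{3} \leq \Lambda^{1/2} E_{m}$ and $\| u(m\delta)\|_{G^{\sigma_1}}^{4} \leq \Lambda E_{m}$, so the uniform per-step multiplier is
\[
\theta := e^{-2\gamma\delta} + C_{1}\sigma_{1}^{\kappa}\Lambda^{1/2} + C_{2}\sigma_{1}^{\varrho}\Lambda + C_{3}\sigma_{1}\| a\|_{A^{\sigma_1}}.
\]
Because $e^{-2\gamma\delta} < 1$ and, by assumption (A2), $\sigma\| a\|_{A^{\sigma}} \to 0$ as $\sigma \to 0$, the last three summands can be driven below any prescribed threshold by shrinking $\sigma_{1}$, so $\theta < 1$ may be arranged with $\sigma_{1}$ chosen independently of $n$. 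Unwinding the recurrence then gives
\[
E_{n} \leq \theta^{n} E_{0} + \sum_{m=0}^{n-1} \theta^{\,n-1-m} B_{m}.
\]
Here $\theta < 1$ forces $\theta^{n} E_{0} \leq E_{0} \leq \| u_{0}\|_{G^{\sigma_0}}^{2}$, while the geometric decay of $B_{m}$ through the factor $e^{-\gamma m\delta}$ bounds the convolution sum by a constant $D_{n}$ (depending on $\delta$, $\| u_{0}\|_{G^{\sigma_1}}$, and $\| a\|_{L^{\infty}}$) times $\| u_{0}\|_{L^{2}}^{2}$, which is exactly the claimed inequality. Verifying that this total does not exceed the budget $\Lambda$ closes the induction and extends the bound to all $n \in \mathbb{N}$.

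The \emph{main obstacle} is the $n$-independent selection of $\sigma_{1}$ together with the closure of the bootstrap. The genuinely nonlinear increments $C_{1}\sigma_{1}^{\kappa}E_{m}^{3/2}$ and $C_{2}\sigma_{1}^{\varrho}E_{m}^{2}$ do not decay in $m$, so treating them as additive forcing would produce a bound growing linearly in $n$ and destroy any uniform-in-$T$ control of the radius of analyticity. The resolution, which is precisely what the damping term buys us, is to fold these increments into the multiplier $\theta$: the strict contraction $e^{-2\gamma\delta} < 1$ supplied by (A1) is what permits $\theta < 1$ and hence a convergent geometric series. One must be careful that the smallness demanded of $\sigma_{1}$ is dictated solely by the fixed bootstrap radius $\Lambda \sim \| u_{0}\|_{G^{\sigma_0}}^{2}$ and never by $n$ or $T$, since it is this $\sigma_{1}$ that will serve as the uniform lower bound $\tilde{\sigma}_{0}$ for the radius of analyticity in Theorem \ref{global}.
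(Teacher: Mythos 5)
Your proposal is correct and follows essentially the same route as the paper: both iterate the approximate conservation law of Proposition \ref{cha1the3} slab by slab, choose $\sigma_{1}$ small (uniformly in $n$, justified by an inductive/bootstrap bound on $\| u(m\delta)\|_{G^{\sigma_{1}}}$) so that the nonlinear increments are absorbed, and use the $L^{2}$ decay \eqref{cons1} to make the remaining forcing summable. The only difference is bookkeeping --- you keep a strict per-step contraction $\theta<1$ and sum a geometric series, whereas the paper arranges the multiplier to equal exactly $1$ (splitting the $C_{4}$ term by Young's inequality) and lets $D_{n}$ accumulate as a convergent series --- which does not change the substance of the argument.
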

\begin{proof}
By induction on $n$.  In the $n = 1$ case, we first apply Proposition \ref{cha1the3} to see that, for $t > 0$, we have
\begin{equation}\label{delest}\begin{aligned}
	\| u(\delta)\|^{2}_{G^{\sigma}} & \leq e^{-2\gamma \delta} \| u_{0}\|^{2}_{G^{\sigma}} + C_{1} \sigma^{\kappa} \| u_{0} \|^{3}_{G^{\sigma}} + C_{2} \sigma^{\varrho} \| u_{0} \|^{4}_{G^{\sigma}}\\ & \quad + C_{3} \sigma \| a \|_{A^{\sigma}}\| u_{0} \|^{2}_{G^{\sigma}} + C_{4} \| a \|_{L^{\infty}} \|u_{0}\|_{L^{2}} \|u_{0} \|_{G^{\sigma}}.
\end{aligned}
\end{equation}
Focusing on the final term on the right-hand side of this inequality, observe that for $t > 0$, we have
\[
C_{4} \| a \|_{L^{\infty}} \|u_{0}\|_{L^{2}} \|u_{0} \|_{G^{\sigma}} \leq \frac{1 - e^{-2 \gamma \delta}}{2} \| u_{0} \|_{G^{\sigma}}^{2} + \frac{C_{4}^{2} \| a \|_{L^{\infty}}^{2}}{2(1 - e^{-2 \gamma \delta})} \| u_{0} \|_{L^{2}}^{2}.
\]
The base case is then satisfied if $\sigma$ is chosen so that
\begin{equation}\label{smallness}\begin{aligned}
    C_{1} \sigma^{\kappa} \| u_{0} \|_{G^{\sigma}} & \leq \frac{1 - e^{- 2 \gamma \delta}}{6}, \\
    C_{2} \sigma^{\rho} \| u_{0} \|_{G^{\sigma}}^{2} & \leq \frac{1 - e^{- 2 \gamma \delta}}{6}, \\
    C_{3} \sigma \| a \|_{A^{\sigma}} & \leq \frac{1 - e^{- 2 \gamma \delta}}{6},
\end{aligned}\end{equation}
and
\[
D_{1} = \frac{C_{4}^{2} \| a \|_{L^{\infty}}^{2}}{2 (1 - e^{-2 \gamma \delta})}.
\]

Now, assume the result holds for $n = k$.  Setting $u(k \delta)$ as initial data $\tilde{u}_{0}$ and applying the estimate from the base case, we have
\begin{align*}
    \| u((k+1)\delta) \|_{G^{\sigma}}^{2} & \leq \| \tilde{u}_{0} \|_{G^{\sigma}}^{2} + D_{1} \| \tilde{u}_{0} \|_{L^{2}} \\
    & \leq \| u(k\delta) \|_{G^{\sigma}}^{2} + D_{1} \| u(k \delta) \|_{L^{2}}^{2}.
\end{align*}
Using equation \eqref{cons1} and the inductive hypothesis, this becomes
\begin{equation}\label{almostdone}
\| u((k+1)\delta) \|_{G^{\sigma}}^{2} \leq \| u_{0} \|_{G^{\sigma}}^{2} + D_{k} \| u_{0} \|_{L^{2}}^{2} + e^{- 2 k \delta \gamma} \| u_{0} \|_{L^{2}}^{2}.
\end{equation}
Setting
\[
D_{k+1} = D_{k} + e^{- 2 k \delta \gamma}
\]
yields the desired result.
\end{proof}

To complete the proof of Theorem \ref{global}, we observe that the estimate in equation \eqref{almostdone} implies that
\[
\| u(k\delta) \|_{G^{\sigma}} \leq (2 + D_{1})^{\frac{1}{2}} \left( \sum_{j = 0}^{k-1} e^{-2 j \delta \gamma} \right)^{\frac{1}{2}} \| u_{0} \|_{G^{\sigma}}.
\]
Letting $k \rightarrow \infty$ in the sum above yields a convergent series, which implies that
\[
\| u(k\delta) \|_{G^{\sigma}} \leq C \| u_{0} \|_{G^{\sigma}}
\]
for some $C > 0$.  This implies that we may apply the local well-posedness result repeatedly at $t = n \delta$, and the resulting solution will satisfy
\begin{equation}\label{globalfinal}
\| u(t) \|_{G^{\sigma}} \leq C \| u_{0} \|_{G^{\sigma}}
\end{equation}
for any $\sigma$ satisfying the inequalities in equation \eqref{smallness} and $\sigma \leq \sigma_{0}$.
Let $\sigma_{1}$ be any such number.

Next, observe that
\begin{align*}
    \| u(t) \|_{G^{\frac{\sigma_{1}}{2}}} & \leq \| u(t) \|_{L^{2}}^{\frac{1}{2}} \| u(t) \|_{G^{\sigma_{1}}}^{\frac{1}{2}} \\
    & \leq C e^{- \frac{t \gamma}{2}} \| u_{0} \|_{G^{\sigma_{0}}}
\end{align*}
by equations \eqref{cons1} and \eqref{globalfinal}.  Setting
\[
\tilde{\sigma_{0}} = \frac{\sigma_{1}}{2}
\]
yields equation \eqref{exp}.  This completes the proof of Theorem \ref{global}.

\end{document}